\theoremstyle{plain}
\newtheorem{thm}{Theorem}[section]  
\newtheorem{lemma}[thm]{Lemma}  
\newtheorem{prop}[thm]{Proposition}  
\newtheorem{claim}[thm]{Claim}  
\theoremstyle{definition}
\newtheorem{defn}[thm]{Definition}  
\newtheorem{rem}[thm]{Remark}  
\theoremstyle{definition}
\newtheorem{quest}[thm]{Question} 
\title{\textbf{$C^0$-flexibility of Legendrian discs}}
\author{\textbf{Maksim Stoki\'c}}
\date{\empty}
\begin{document}

\maketitle

\begin{abstract}
    \noindent We construct a compactly supported contact homeomorphism of $\mathbb{R}^5$ that maps a Legendrian disc to a smooth disc that is nowhere Legendrian. This flexibility result comes in contrast with the case of closed Legendrians, where Dimitroglou-Rizell and Sullivan proved that $C^0$-rigidity holds.

    \bigskip
    \noindent\textbf{Note added:} We noticed a gap in the proof of Proposition \ref{PropZigZagStrips}. We are currently working on fixing it.
\end{abstract}

\section{Introduction}

\noindent A \textit{contact homeomorphism} of a contact manifold $(Y, \xi)$ is defined as a homeomorphism that arises as the $C^0$-limit of a sequence of contact diffeomorphisms. The contact version of the celebrated Eliashberg-Gromov rigidity theorem implies that a smooth contact homeomorphism preserves the contact structure. In this paper, we focus on Legendrian submanifolds and try to understand their images under contact homeomorphisms. More precisely, our aim is to answer the following question.

\begin{quest}\label{Question1}
    Let $\Lambda\subset (Y,\xi)$ be a Legendrian submanifold, and let $\phi:Y\rightarrow Y$ be a contact homeomorphism such that the image $\phi(\Lambda)$ is a smooth submanifold of $Y$. Must $\phi(\Lambda)$ be Legendrian?
\end{quest}

\noindent The above question has been extensively studied, and we summarize some of the relevant results. Rosen and Zhang (see \cite{RZ20}) proved the $C^0$-rigidity of Legendrians under the assumption that the approximating sequence of contact diffeomorphisms has conformal factors that converge uniformly. Usher (see \cite{Us20}) further relaxed this assumption by requiring only a uniform lower bound on the sequence of conformal factors. The first result without any assumptions on the conformal factors was obtained in dimension 3 by Dimitroglou Rizell and Sullivan, who proved (see \cite{RS22a}) that a contact homeomorphism cannot map a Legendrian knot to a smooth non-Legendrian knot. Shortly after, another proof of the $C^0$-rigidity of Legendrian knots appeared in \cite{St22}, which inspired Dimitroglou Rizell and Sullivan to establish the $C^0$-rigidity of closed Legendrians in any dimension (see \cite{RS22b}). Contrary to all previous results, we provide a negative answer to the above question when $\Lambda$ is a smoothly embedded Legendrian disc in $(\mathbb{R}^5, \ker(dz - ydx - pdq))$.

\begin{thm}\label{TheoremMain}
    There exists a contact homeomorphism $\Phi$ of $\mathbb{R}^5$ with the standard contact form, which maps smoothly embedded Legendrian disc $\Lambda:[-1,1]^2\rightarrow\mathbb{R}^5$ to a smoothly embedded nowhere Legendrian disc $\Lambda_{\infty}:[-1,1]^2\rightarrow\mathbb{R}^5,\,(t,s)\mapsto(0,0,t,s,0)$.
\end{thm}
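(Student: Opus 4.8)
The plan is to produce $\Phi$ as a $C^0$-limit $\Phi=\lim_{n}\Phi_n$ of compactly supported contact diffeomorphisms, where $\Phi_n=\chi_n\circ\dots\circ\chi_1$ is an infinite composition of contact diffeomorphisms $\chi_k$, all supported in one fixed compact subset of $\mathbb R^5$, arranged so that $\Phi_n(\Lambda)=\Lambda_n$ for a sequence of Legendrian embeddings $\Lambda_n$ with $\Lambda_n\to\Lambda_\infty$ uniformly. If the $\chi_k$ and $\chi_k^{-1}$ displace points by at most $\varepsilon_k$, with $\sum_k\varepsilon_k<\infty$, and their supports shrink toward the $2$-disc $\Lambda_\infty$ (a set of empty interior in $\mathbb R^5$), then the usual bookkeeping for nested compositions of this type---choosing each $\varepsilon_{k+1}$ small relative to the already-built $\Phi_k$---makes both $\Phi_n$ and $\Phi_n^{-1}$ uniformly Cauchy, so that $\Phi:=\lim\Phi_n$ is a compactly supported contact homeomorphism with $\Phi(\Lambda)=\lim\Phi_n(\Lambda)=\lim\Lambda_n=\Lambda_\infty$. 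That the target is a smooth nowhere Legendrian embedding is immediate: $(t,s)\mapsto(0,0,t,s,0)$ is visibly a smooth embedding of $[-1,1]^2$, and at every point $T\Lambda_\infty=\mathrm{span}\{\partial_z,\partial_q\}$ is not contained in $\xi:=\ker(dz-y\,dx-p\,dq)$ because $dz(\partial_z)=1$. So everything reduces to building the $\Lambda_n$ together with the $\chi_k$ under the displacement control above.

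For the target sequence, note that $\Lambda_\infty$ splits: for fixed $t$ the arc $s\mapsto(0,0,t,s,0)$ is already Legendrian, while for fixed $s$ the arc $t\mapsto(0,0,t,s,0)$ is a Reeb chord. The latter is $C^0$-approximated by genuine Legendrian arcs inside the contact $3$-space $\{q=s,\ p=0\}$ by a corrugation in the $(x,y)$-plane of vanishing amplitude---for instance curves $t\mapsto\bigl(r_n(t)\cos\theta_n(t),\,r_n(t)\sin\theta_n(t),\,z_n(t)\bigr)$ with $r_n\to0$, $\theta_n$ winding ever faster, and $z_n$ forced by the Legendrian relation $\dot z_n=y_n\dot x_n$, whose rapid oscillations average out so that $z_n(t)\to t$ uniformly. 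Letting $s$ run freely with $(q,p)=(s,0)$ suspends this to smooth Legendrian discs $\Lambda_n\to\Lambda_\infty$ in $C^0$, all in a single Legendrian isotopy class; I would take $\Lambda:=\Lambda_1$ (possibly after a preliminary tame contact isotopy so that $\Lambda$ is already $C^0$-close to $\Lambda_\infty$). Reindexing, one arranges that $\Lambda_{k+1}$ arises from $\Lambda_k$ by a local refinement of the corrugation: a Legendrian isotopy of $C^0$-size $\varepsilon_{k+1}$ whose trace stays in a tubular neighborhood $U_{k+1}$ of $\Lambda_\infty$ of thickness shrinking to $0$ in the normal directions $(x,y,p)$.

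The crux is to extend each such refinement to an ambient $\chi_{k+1}\in\mathrm{Cont}_c(\mathbb R^5)$, supported in $U_{k+1}$, for which $\chi_{k+1}$ and $\chi_{k+1}^{-1}$ displace points by at most $\varepsilon_{k+1}$. In the normal-form coordinates where a neighborhood of $\Lambda_\infty$ reads $\{|x|,|y|,|p|<\delta\}$ with contact form $dz-y\,dx-p\,dq$, the refinement is generated along the disc by a contact Hamiltonian, which I would extend to the shell $U_{k+1}$ by a cutoff in $(x,y,p)$---the delicate point being to keep the generated flow $C^0$-small even though the first derivatives of the corrugation are blowing up. This is the step that uses both hypotheses. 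The stirring of the thin shell that carries $\Lambda_k$ onto $\Lambda_{k+1}$ should be performed with the help of the two extra symplectic directions $(q,p)$, absent in the contact $3$-space, so that its $C^0$-cost is governed by the vanishing thickness of the shell rather than by the exploding frequency; and since $\Lambda$ is a disc whose boundary lies in the interior of $\mathbb R^5$, it is not among the properly embedded Legendrians for which $C^0$-rigidity is known \cite{RS22b}, so nothing forbids such a construction a priori. By contrast, the corresponding $3$-dimensional move cannot be made $C^0$-small---this is exactly what the rigidity theorems recalled in the introduction prohibit---which is why the phenomenon is genuinely $5$-dimensional.

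Granting the core step, the partial compositions $\Phi_n$ and their inverses converge uniformly by the displacement estimates, and $\Phi$ is a compactly supported contact homeomorphism with $\Phi(\Lambda)=\Lambda_\infty$. I expect the real difficulty to be concentrated there: coupling, across the induction, the corrugation amplitudes, the shell thicknesses, and the displacement budget $\sum_k\varepsilon_k<\infty$ so that each refinement is at once Legendrian, confined to an ever thinner shell, and realized by a $C^0$-small ambient contactomorphism---which I would expect to require an explicit construction of the $\chi_k$ rather than an appeal to a general contact isotopy extension theorem.
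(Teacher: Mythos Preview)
Your overall scaffolding matches the paper's: a Cauchy sequence of compactly supported contactomorphisms $\Phi_n=\chi_n\circ\cdots\circ\chi_1$ with $\Phi_n(\Lambda)=\Lambda_n$ for corrugated Legendrian discs $\Lambda_n\to\Lambda_\infty$, and the paper indeed uses essentially the helices you describe in $(\mathbb R^3,\ker(dz-y\,dx))$ suspended by $(q,p)=(s,0)$. But the step you single out as the crux is a genuine gap, and the mechanism you propose---extending a generating contact Hamiltonian by a cutoff in $(x,y,p)$---cannot produce what is needed.

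The issue is that passing from $\Lambda_k$ to $\Lambda_{k+1}$ forces the underlying Legendrian arc in $\mathbb R^3$ to acquire many new zig-zags. In $\mathbb R^3$ this is obstructed by the rotation number: there is \emph{no} Legendrian isotopy between $\gamma_{m_k}$ and $\gamma_{m_{k+1}}$, so there is no three-dimensional flow to cut off, and no slice-by-slice argument can work. The passage is therefore intrinsically five-dimensional, and the paper's solution is not a clever Hamiltonian but Murphy's loose-Legendrian $h$-principle. One first writes down a \emph{formal} Legendrian isotopy from $\Lambda_k$ to the target (easy, since the domain is a disc), arranged as an ``isotopy by parts'' supported in many disjoint boxes of diameter $\lesssim 1/m_k$; then approximates it $C^0$-closely by a \emph{wrinkled} Legendrian isotopy; and finally resolves each wrinkle via a twist marking landing in a nearby loose chart. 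The crucial observation is that the many zig-zags already present in $\Lambda_k$ furnish a dense supply of loose charts of size $\sim 1/m_k$ spread throughout the disc, so every wrinkle can be resolved within its own small box and the resulting genuine Legendrian isotopy is still an isotopy by parts---hence extends to an ambient contactomorphism of $C^0$-norm $\lesssim 1/m_k$. This is exactly the content of the paper's Proposition~\ref{PropZigZagStrips}, and nothing in your proposal substitutes for it.
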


\begin{rem}
    As mentioned above, Dimitroglou-Rizell and Sullivan proved the $C^0$-rigidity of closed Legendrians, which, together with Theorem \ref{TheoremMain}, implies that $C^0$-Legendrian rigidity is a global phenomenon. To the best of the author’s knowledge, this is the first such phenomenon in $C^0$-contact and symplectic geometry. For example, the $C^0$-rigidity of Lagrangian submanifolds (see \cite{HLS15}) applies to both open and closed Lagrangian submanifolds.
\end{rem}

\begin{rem}
    We believe that the same construction works in $\mathbb{R}^{2n+1} \cong \mathbb{R}^3 \times T^*\mathbb{R}^{n-1}$ for $n > 1$. In this case, we can map a Legendrian $n$-disc to a non-Legendrian one, defined as the product of a transverse curve in $\mathbb{R}^3$ and the $(n-1)$-disc in the zero-section of $T^*\mathbb{R}^{n-1}$. The three-dimensional case remains open, and our method relies on the $h$-principle for loose Legendrians, which only applies in dimensions $\geq 5$.
\end{rem}

\noindent Recall that a contact homeomorphism $\Phi$ comes with a sequence of contactomorphisms $\{\Phi_i\}_{i\geq 1}$ such that $\Phi_i\xrightarrow[i\rightarrow\infty]{C^0}\Phi$. Let $D\subset\mathbb{R}^5$ be a Legendrian disc such that the image $\Phi(D)$ is smooth. Consider the following two cases:
\begin{enumerate}
    \item The sequence \(\{\Phi_i\}_{i \geq 1}\) is fixed near the boundary \(\partial D\).
    \item No restrictions are imposed on the sequence \(\{\Phi_i\}_{i \geq 1}\) near the boundary \(\partial D\).
\end{enumerate}

\noindent In the first case, the $C^0$-Legendrian rigidity for closed manifolds implies that the image $\Phi(D)$ must be Legendrian. In contrast, in the second case, Theorem \ref{TheoremMain} implies that the image $\Phi(D)$ can be non-Legendrian. The next important step is to identify the transition point between rigidity and flexibility.

\begin{quest}\label{Question2}
    What is the natural boundary condition (on the sequence $\{\Phi_i\}_{i\geq 1}$ near $\partial D$) under which the image $\Phi(D)$ has to be Legendrian?
\end{quest}

\noindent A more general version of Question \ref{Question1}, when $\Lambda$ is coisotropic (instead of Legendrian), remains open. We believe that Question \ref{Question2} provides a good starting point for tackling the $C^0$-coisotopic rigidity.

\subsection{Acknowledgments}

\noindent I thank Lev Buhovsky, Yakov Eliashberg, and Leonid Polterovich for the valuable discussions and interest in this work. The author was partially supported by ERC Starting Grant 757585 and ISF Grant 0603603171.

\section{Proof of Theorem \ref{TheoremMain}}\label{Section3}

\noindent\textbf{Step I)} \textit{We construct a sequence of Legendrian embeddings $\big\{\Lambda_i:[-1,1]^2\rightarrow\mathbb{R}^5\big\}_{i=1}^{\infty}$, and a non-Legendrian embedding $\Lambda_{\infty}:[-1,1]^2\to\mathbb{R}^5$ such that:}
\begin{equation}\label{Condition1}
    \sum_{i=1}^{\infty}d_{C^0}(\Lambda_i,\Lambda_{\infty})<\infty.
\end{equation}

\noindent\textbf{Step II)} \textit{We construct a sequence of contactomorphisms $\big\{\Psi_i:\mathbb{R}^5\rightarrow\mathbb{R}^5\big\}_{i=1}^{\infty}$, supported in a single compact subset of $\mathbb{R}^5$, such that:}
\begin{equation}\label{Condition2}
    \Psi_i\circ\Lambda_i=\Lambda_{i+1},\quad ||\Psi_i||_{C^0}\leq C/2^i
\end{equation}
\begin{equation}\label{Condition3}
    (\forall\, U\subset\overline{U}\subset\mathbb{R}^5\setminus\mathrm{Im}\,\Lambda_1)(\exists\,m=m(U))\quad \Phi_m|_U=\Phi_{m+1}|_{U}=\Phi_{m+2}|_{U}=\cdots
\end{equation}
\noindent\textit{where $\Phi_i:=\Psi_i\circ\Psi_{i-1}\circ\ldots\circ\Psi_1$.}

\begin{claim}\label{ClaimConditions(1,2,3)implyThm1}
    If the sequences $\{\Lambda_i\}_{i=1}^{\infty}$ and $\{\Psi_i\}_{i=1}^{\infty}$ satisfy properties (\ref{Condition1}),(\ref{Condition2}), (\ref{Condition3}), then the sequence $\Psi_{i}$ $C^0$-converges to a homeomorphism $\Phi$ that satisfies $\Phi\circ\Lambda_1=\Lambda_{\infty}$.
\end{claim}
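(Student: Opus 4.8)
The plan is to first show that the partial compositions $\Phi_i$ converge uniformly, identify the limit on $\mathrm{Im}\,\Lambda_1$, and then promote the limit to a homeomorphism using the stabilization property (\ref{Condition3}).

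\textbf{Step 1 (convergence).} By the triangle inequality $d_{C^0}(\Lambda_i,\Lambda_{i+1})\le d_{C^0}(\Lambda_i,\Lambda_\infty)+d_{C^0}(\Lambda_\infty,\Lambda_{i+1})$, so (\ref{Condition1}) gives $\sum_i d_{C^0}(\Lambda_i,\Lambda_{i+1})<\infty$, and then the displacement bound in (\ref{Condition2}) gives $\sum_i\|\Psi_i\|_{C^0}<\infty$. Since $\Phi_{i+1}=\Psi_{i+1}\circ\Phi_i$, we get $d_{C^0}(\Phi_{i+1},\Phi_i)=\sup_p|\Psi_{i+1}(\Phi_i(p))-\Phi_i(p)|\le\|\Psi_{i+1}\|_{C^0}$, so $\{\Phi_i\}$ is $C^0$-Cauchy and converges uniformly to a continuous map $\Phi\colon\mathbb{R}^5\to\mathbb{R}^5$. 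As all $\Psi_i$, hence all $\Phi_i$, are the identity outside one fixed compact set $K$, so is $\Phi$.

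\textbf{Step 2 (value on the disc).} Iterating the identity $\Psi_i\circ\Lambda_i=\Lambda_{i+1}$ from (\ref{Condition2}) gives $\Phi_i\circ\Lambda_1=\Lambda_{i+1}$. Since $\Phi_i\to\Phi$ uniformly and $\Lambda_{i+1}\to\Lambda_\infty$ uniformly by (\ref{Condition1}), passing to the limit yields $\Phi\circ\Lambda_1=\Lambda_\infty$; in particular $\Phi(\mathrm{Im}\,\Lambda_1)=\mathrm{Im}\,\Lambda_\infty$ and $\Phi|_{\mathrm{Im}\,\Lambda_1}$ is injective because $\Lambda_\infty$ is an embedding.

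\textbf{Step 3 (homeomorphism).} A uniform limit of diffeomorphisms need not be injective, and ruling this out is exactly the function of (\ref{Condition3}). For $p\notin\mathrm{Im}\,\Lambda_1$ property (\ref{Condition3}) produces an $m$ with $\Phi(p)=\Phi_m(p)$; using that the orbit of $p$ leaves the supports of all later $\Psi_n$ and that these supports shrink onto $\mathrm{Im}\,\Lambda_\infty$, one checks that $\Phi$ in fact coincides with $\Phi_m$ on a whole neighborhood of $p$, so $\Phi$ is a local homeomorphism on $\mathbb{R}^5\setminus\mathrm{Im}\,\Lambda_1$ carrying it into $\mathbb{R}^5\setminus\mathrm{Im}\,\Lambda_\infty$. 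Injectivity then follows by cases: two points of $\mathbb{R}^5\setminus\mathrm{Im}\,\Lambda_1$ lie in a common neighborhood on which $\Phi$ equals a single injective $\Phi_m$; two points of $\mathrm{Im}\,\Lambda_1$ are separated by Step 2; and a point of $\mathrm{Im}\,\Lambda_1$ cannot be identified with one outside it since $\Phi(\mathrm{Im}\,\Lambda_1)=\mathrm{Im}\,\Lambda_\infty$ is disjoint from $\Phi(\mathbb{R}^5\setminus\mathrm{Im}\,\Lambda_1)$. By invariance of domain the injective continuous map $\Phi$ is open, hence a homeomorphism onto the open set $\Phi(\mathbb{R}^5)$; since $\Phi=\mathrm{id}$ off $K$ and $\Phi(K)$ is compact, $\Phi(\mathbb{R}^5)$ is also closed (split a convergent sequence $\Phi(x_n)$ according to whether $x_n$ stays in $K$), so by connectedness of $\mathbb{R}^5$ it is onto. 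Thus $\Phi$ is a homeomorphism with $\Phi\circ\Lambda_1=\Lambda_\infty$.

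The main obstacle is Step 3, specifically the claim that $\Phi$ does not glue a point of the Legendrian disc $\mathrm{Im}\,\Lambda_1$ to a point of its complement — equivalently, continuity of $\Phi^{-1}$ along $\mathrm{Im}\,\Lambda_\infty$. The crude displacement estimate $\sum_i\|\Psi_i\|_{C^0}<\infty$ is not enough here, since contactomorphisms may contract distances by arbitrarily large factors; property (\ref{Condition3}), together with the feature of the construction that the supports of the $\Psi_i$ accumulate exactly on $\mathrm{Im}\,\Lambda_\infty$, is what forces the orbits to stabilize away from $\mathrm{Im}\,\Lambda_1$ and thereby prevents such a collapse.
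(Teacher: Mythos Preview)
Your Steps 1 and 2 are correct and coincide with the paper. The problems are in Step~3. First, your treatment of two points $x,y\notin\mathrm{Im}\,\Lambda_1$ via a ``common neighborhood'' is unnecessary and, as phrased, not right (two arbitrary points need not lie in one such neighborhood); the paper simply applies (\ref{Condition3}) at $x$ and at $y$ separately, takes $m$ large enough for both, and uses $\Phi(x)=\Phi_m(x)\neq\Phi_m(y)=\Phi(y)$. Second, and more seriously, you repeatedly invoke that ``the supports of the $\Psi_i$ shrink onto $\mathrm{Im}\,\Lambda_\infty$'' in order to conclude both that $\Phi$ agrees with some $\Phi_m$ on a full neighborhood of each off-disc point and that $\Phi(\mathbb{R}^5\setminus\mathrm{Im}\,\Lambda_1)\subset\mathbb{R}^5\setminus\mathrm{Im}\,\Lambda_\infty$. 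Neither of these follows from (\ref{Condition1}),(\ref{Condition2}),(\ref{Condition3}): condition (\ref{Condition3}) is a purely pointwise stabilization statement and says nothing about supports. This shrinking does hold in the paper's construction, but it is the content of the stronger condition (\ref{Condition3upgraded}), not of (\ref{Condition3}) as hypothesized in the claim.

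For the mixed case $x\in\mathrm{Im}\,\Lambda_1$, $y\notin\mathrm{Im}\,\Lambda_1$ the paper argues differently: take an open ball $B\ni y$ with $\overline{B}\cap\mathrm{Im}\,\Lambda_1=\emptyset$; once $\Phi|_B=\Phi_m|_B$ for some large $m$, the set $\Phi(B)=\Phi_m(B)$ is \emph{open}, and for every $n\ge m$ one has $\Phi_n(\mathrm{Im}\,\Lambda_1)=\mathrm{Im}\,\Lambda_{n+1}$ disjoint from $\Phi_n(B)=\Phi(B)$. Letting $n\to\infty$ against the fixed open set $\Phi(B)$ gives $\mathrm{Im}\,\Lambda_\infty\cap\Phi(B)=\emptyset$, hence $\Phi(x)\in\mathrm{Im}\,\Lambda_\infty$ while $\Phi(y)\in\Phi(B)$. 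The idea you are missing is this limiting argument against a fixed open neighborhood of $\Phi(y)$, rather than an unjustified a priori assertion that $\Phi$ carries the complement of $\mathrm{Im}\,\Lambda_1$ off $\mathrm{Im}\,\Lambda_\infty$. (The paper is itself somewhat casual in passing from the pointwise (\ref{Condition3}) to $\Phi|_B=\Phi_m|_B$; strictly speaking this step too relies on the stronger (\ref{Condition3upgraded}).)
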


\begin{proof}
    Property (\ref{Condition2}) implies that $\{\Phi_i\}_{i=1}^{\infty}$ is a Cauchy sequence, and therefore it converges uniformly to a continuous map $\Phi$. Moreover, property (\ref{Condition1}) implies that $\Phi\circ\Lambda_1=\Lambda_{\infty}$. Let us prove that $\Phi$ is an injective map, and hence a homeomorphism. Let $x\neq y$ be different points in $\mathbb{R}^5$. If $x,y\in\mathrm{Im}\,\Lambda_1$ then $\Phi(x)=\Lambda_{\infty}\circ\Lambda_1^{-1}(x)\neq \Lambda_{\infty}\circ\Lambda_1^{-1}(y)=\Phi(y)$. If $x,y\in\mathbb{R}^5\setminus\mathrm{Im}\,\Lambda_1$, then the property (\ref{Condition3}) implies that for $m$ large enough we have $\Phi(x)=\Phi_m(x)\neq\Phi_m(y)=\Phi(y)$. Finally, assume $x\in\mathrm{Im}\,\Lambda_1$, $y\in\mathbb{R}^5\setminus\mathrm{Im}\,\Lambda_1$, and let $B\subset\mathbb{R}^5$ be an open ball centered in $y$, such that $\overline{B}\cap\mathrm{Im}\,\Lambda_1=\emptyset$. The property (\ref{Condition3}) implies that for $m$ large enough we have $\Phi_m|_{B}=\Phi|_{B}$, and hence $\Phi_m(\mathrm{Im}\,\Lambda)\in\mathbb{R}^5\setminus\Phi_m(B)=\mathbb{R}^5\setminus\Phi(B)$. Passing to the limit $m\rightarrow\infty$ yields $\Phi(x)\in\Phi(\mathrm{Im}\,\Lambda_1)\subset\mathbb{R}^5\setminus\Phi(B)$, and so $\Phi(x)\neq\Phi(y)$.
\end{proof}

\noindent Let us prove using induction that the sequences with properties (\ref{Condition1}),(\ref{Condition2}), and (\ref{Condition3}) do exist. We will strengthen these conditions in order to be able to complete the induction.\\

\noindent\textbf{Strengthening condition (\ref{Condition1}):} Let $\gamma_m:[-1,1]\rightarrow(\mathbb{R}^3,\mathrm{ker}\,dz-ydx)$ be a sequence of Legendrian embeddings defined as
\begin{equation}\label{Definition_gamma_m(t)}
    \gamma_m(t)=\Big(\frac{\sin{m^2t}-\cos{m^2t}}{m},\frac{\sin{m^2t}+\cos{m^2t}}{m},t-\frac{\cos{2m^2t}}{2m^2}\Big)
\end{equation}
\noindent We require that the sequence $\{\Lambda_i\}_{i=1}^{\infty}$ has form
\begin{equation}\label{ConditionLambda}
    \Lambda_i(t,s):=(\gamma_{m_i}(t),s,0)
\end{equation}
\noindent where $\{m_i\}_{i=1}^{\infty}$ is a sequence of natural numbers that satisfies
\begin{equation}\label{ConditionOn(m_i)}
    \forall i\in\mathbb{N}\quad m_{i+1}>\max\{2^i,m_i\}
\end{equation}
\noindent Note that $\gamma_m$ $C^0$-converges to a transverse curve $\gamma_{\infty}(t):=(0,0,t)$. The condition (\ref{ConditionOn(m_i)}) implies that the series $\sum_{i=1}^{\infty}1/m_i$ converges, and therefore the sequence $\{\Lambda_i\}_{i=1}^{\infty}$ satisfies condition (\ref{Condition1}).\\

\noindent\textbf{Strengthening condition (\ref{Condition3}):} To each Legendrian $\Lambda_i(t,s)=(\gamma_{m_i}(t),s,0)$ we assign an open subset $\mathcal{V}_i\subset\mathbb{R}^5$ defined as
\begin{equation}\label{EquationV_k}
    \mathcal{V}_i=\Big\{|x|<\frac{10}{m_i},|y|<\frac{10}{m_i},|z|<1+\frac{10}{m_i},|q|<1+\frac{10}{m_i},|p|<\frac{10}{m_i}\Big\}\supset\mathrm{Im}\,\Lambda_i\cup \mathrm{Im\,\Lambda_{\infty}}
\end{equation}
\noindent Let $\{\mathcal{U}_i\}_{i=1}^{\infty}$ be a decreasing sequence of open sets with $\mathcal{U}_1=\mathbb{R}^5$ and $\bigcap_{i\geq 1}\mathcal{U}_i=\mathrm{Im}\,\Lambda_1$. Finally, we require that a sequence of contactomorphisms $\{\Psi_i\}_{i=1}^{\infty}$ satisfies (\ref{Condition2}) and
\begin{equation}\label{Condition3upgraded}
    \mathrm{supp}\,\Psi_i\subset\mathcal{V}_i\subset\Phi_{i-1}(\mathcal{U}_{i}),\quad\text{ where }\Phi_0:=\mathrm{Id}\text{ and }\Phi_i=\Psi_{i}\circ\Psi_{i-1}\circ\cdots\circ\Psi_1
\end{equation}

\begin{claim}\label{Claim(3)implies(3)upgraded}
    The condition (\ref{Condition3upgraded}) implies condition (\ref{Condition3}). Additionally, it implies that all contactomorphisms $\Psi_i$ have support in a compact set $\overline{\mathcal{V}_1}$.
\end{claim}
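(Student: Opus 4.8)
The plan is to derive both assertions from the set-theoretic bookkeeping encoded in (\ref{Condition3upgraded}); nothing analytic is needed.

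\textbf{Compact support.} By (\ref{ConditionOn(m_i)}) the sequence $\{m_i\}$ is strictly increasing, so $m_i\ge m_1$ for all $i$. Every coordinate bound defining $\mathcal{V}_i$ in (\ref{EquationV_k}) is a non-increasing function of $m_i$ — namely $10/m_i$ on $x,y,p$ and $1+10/m_i$ on $z,q$ — so $\mathcal{V}_i\subset\mathcal{V}_1$ for every $i$. Hence $\mathrm{supp}\,\Psi_i\subset\mathcal{V}_i\subset\overline{\mathcal{V}_1}$, and $\overline{\mathcal{V}_1}$ is a closed bounded box in $\mathbb{R}^5$, thus compact. Since $\mathrm{supp}\,\Psi_i$ is closed, it is a compact subset of $\overline{\mathcal{V}_1}$.

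\textbf{From (\ref{Condition3upgraded}) to (\ref{Condition3}).} Fix $p\in\mathbb{R}^5\setminus\mathrm{Im}\,\Lambda_1$. Since $\{\mathcal{U}_i\}$ is decreasing with $\bigcap_{i\ge1}\mathcal{U}_i=\mathrm{Im}\,\Lambda_1$, there is an index $m$ with $p\notin\mathcal{U}_{m+1}$, and by monotonicity $p\notin\mathcal{U}_j$ for all $j\ge m+1$. Now I would use the recursion $\Phi_j=\Psi_j\circ\Phi_{j-1}$ together with the inclusion $\mathrm{supp}\,\Psi_j\subset\mathcal{V}_j\subset\Phi_{j-1}(\mathcal{U}_j)$: because $\Phi_{j-1}$ is a diffeomorphism, $p\notin\mathcal{U}_j$ implies $\Phi_{j-1}(p)\notin\Phi_{j-1}(\mathcal{U}_j)\supset\mathrm{supp}\,\Psi_j$, so $\Psi_j$ fixes $\Phi_{j-1}(p)$ and therefore $\Phi_j(p)=\Phi_{j-1}(p)$. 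Applying this for each $j\ge m+1$ and telescoping gives $\Phi_m(p)=\Phi_{m+1}(p)=\Phi_{m+2}(p)=\cdots$, which is precisely (\ref{Condition3}).

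I do not anticipate a genuine obstacle; the only subtle point is that the argument relies on the order of composition — $\Psi_i$ being the last factor of $\Phi_i$ — and on the support condition being stated in the pushed-forward frame $\Phi_{i-1}(\mathcal{U}_i)$ rather than in the original coordinates. This is exactly what makes the cancellation $\Phi_j(p)=\Phi_{j-1}(p)$ go through once $p$ has left the shrinking neighbourhoods $\mathcal{U}_j$ of $\mathrm{Im}\,\Lambda_1$; the rest is elementary point-set topology.
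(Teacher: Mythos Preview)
Your proof is correct and follows essentially the same route as the paper: choose an index beyond which $p$ lies outside the shrinking $\mathcal{U}_j$'s, then use $\mathrm{supp}\,\Psi_j\subset\Phi_{j-1}(\mathcal{U}_j)$ to see that $\Psi_j$ fixes $\Phi_{j-1}(p)$. Your step-by-step telescoping is in fact slightly cleaner than the paper's version (which tracks the single set $\Phi_{m_0}(\mathcal{U}_{m_0})$ and implicitly uses that each $\Psi_m$ preserves it), and you also spell out the compact-support assertion, which the paper's proof leaves implicit.
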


\begin{proof}
    Let $U\subset\overline{U}\subset\mathbb{R}^5\setminus\mathrm{Im}\,\Lambda_1$ and pick $m_0$ large enough so that $\overline{U}\subset\mathbb{R}^5\setminus\mathcal{U}_{m_0}$ and consequently $\Phi_{m_0}(\overline{U})\subset\mathbb{R}^5\setminus\Phi_{m_0}(\mathcal{U}_{m_0})$. Note that the property (\ref{Condition3upgraded}) implies that for $m>m_0$ we have $\Psi_m|_{\mathbb{R}^5\setminus\Phi_{m_0}(\mathcal{U}_{m_0})}=\mathrm{Id}$, and therefore $\Phi_m(p)=\Psi_m\circ\cdots\circ\Psi_{{m_0}+1}\circ\Phi_{m_0}(p)=\Phi_{m_0}(p)$ for every $p\in\overline{U}$.
\end{proof}

\noindent Using the induction we will prove that the sequences $\{m_i\}_{i=1}^{\infty}$, $\{\Lambda_i\}_{i=1}^{\infty}$ and $\{\Psi_i\}_{i=1}^{\infty}$ that satisfy properties (\ref{ConditionLambda}),(\ref{ConditionOn(m_i)}),(\ref{Condition2}) and (\ref{Condition3upgraded}) do exist. Then the Theorem \ref{TheoremMain} follows from Claims \ref{ClaimConditions(1,2,3)implyThm1} and \ref{Claim(3)implies(3)upgraded}.\\

\noindent Assume we have constructed natural numbers $m_1<m_2<\ldots<m_k$, Legendrian embeddings $\Lambda_1,\ldots,\Lambda_k$ and contact diffeomorphisms $\Psi_1,\ldots,\Psi_{k-1}$ that satisfy all the properties listed above. We need to construct $m_{k+1}>2\cdot m_k$ and a contact diffeomorphism $\Psi_k$ supported inside $\mathcal{V}_{k}$ such that $\mathcal{V}_{k+1}\subset\Psi_k\circ\Phi_{k-1}(\mathcal{U}_{k+1})$, $\Psi_k\circ\Lambda_{k}=\Lambda_{k+1}$ and $d_{C^0}(\Psi_k,\mathrm{Id})<C/2^k$.

\subsection*{Step 1:}

\noindent\textit{In this step, we want to construct a $C^0$-small contactomorphism $\Psi_k'$ that "stretches" the open set $\Phi_{k-1}(\mathcal{U}_{k+1})\supset\mathrm{Im}\,\Lambda_k$ all the way to the $\mathrm{Im}\,\Lambda_{\infty}$. Moreover, we want to find an embedding $\Pi:[-1,1]^2\times[0,1]\hookrightarrow\mathbb{R}^5$, with $\Pi(t,s,0)=\Lambda_k(t,s)$ and $\Pi(t,s,1)=\Lambda_{\infty}(t,s)$ so that $\mathrm{Im}\,\Pi\subset\Psi'_k\circ\Phi_{k-1}(\mathcal{U}_{k+1})$, and it will serve as a guideline for constructing isotopies between $\Lambda_{k}$ and $\Lambda_{k+1}$. First we focus on the contact manifold $\mathbb{R}^3$, where we find an embedded surface $\Sigma:[-1,1]\times[0,1]\rightarrow\mathbb{R}^2$ with $\Sigma(\cdot,0)=\gamma_{m_k},\,\Sigma(\cdot,1)=\gamma_{\infty}$, and then we extend results to the product $\mathbb{R}^5=\mathbb{R}^3\times\mathbb{R}^2$ where we define $\mathrm{\Pi}(t,s,\tau)=(\Sigma(t,\tau),s,0)$.}\\

\noindent Let $\Sigma:[-1,1]\times[0,1]\rightarrow(\mathbb{R}^3,\mathrm{ker}\,\alpha)$, $\alpha=dz-ydx$ be an embedded surface defined as
\begin{equation*}
    \Sigma(t,w)=\Big(\frac{1-w}{m_k}(\sin{m_k^2t}-\cos{m_k^2t}),\frac{1-w}{m_k}(\sin{m_k^2t}+\cos{m_k^2t}),t-\frac{(1-w)^2}{2m_k^2}\cos{2m_k^2t}\Big)
\end{equation*}

\noindent Additionally, define an embedding $\Pi:[-1,1]^2\times[0,1]\rightarrow\mathbb{R}^5$
\[\Pi(t,s,w)=(\Sigma(t,w),s,0),\]
\noindent and note that for each $(t,s)\in[-1,1]^2$ we have $\Pi(t,s,0)=\Lambda_k(t,s)$, $\Pi(t,s,1)=\Lambda_{\infty}(t,s)$ and $\mathrm{diam}\,\Pi(\{(t,s)\}\times[0,1])<\frac{c}{2^k}$.

\begin{claim}
    There exists a contact diffeomorphism $\Psi'_k$ supported inside $\mathcal{V}_k$ such that $d_{C^0}(\Psi'_k,\mathrm{Id})<C/2^k,\,\Psi'_k\circ\Lambda_k=\Lambda_k$ and $\mathrm{Im}\,\Pi\subset\Psi'_k\circ\Phi_{k-1}(\mathcal{U}_{k+1})$.
\end{claim}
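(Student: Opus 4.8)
The plan is to realise $\Psi_k'$ as the time-one map of an ambient contact isotopy of $\mathbb{R}^5$ obtained by \emph{lifting} the planar isotopy $\{\phi^\tau\}$ of Claim~\ref{StreachingLemma} along the product splitting $\mathbb{R}^5=\mathbb{R}^3(x,y,z)\times\mathbb{R}^2(q,p)$, under which $dz-ydx-pdq$ is the sum of the contact form $dz-ydx$ on the first factor and minus the Liouville form $p\,dq$ on $\mathbb{R}^2=T^*\mathbb{R}$ (the Reeb field is $\partial_z$ throughout). Writing $\phi^\tau$ as the flow of a contact Hamiltonian $h_\tau\colon\mathbb{R}^3\to\mathbb{R}$ with $\mathrm{supp}\,h_\tau\subset\pi_{xyz}(\mathcal{V}_k)$, I fix bump functions $\rho=\rho(p)$ and $\sigma=\sigma(q)$ with $\rho\equiv1$ near $p=0$, $\sigma\equiv1$ on a neighbourhood of $[-1,1]$, $\mathrm{supp}\,\rho\subset(-10/m_k,10/m_k)$, $\mathrm{supp}\,\sigma\subset(-1-10/m_k,1+10/m_k)$, and let $\Psi_k'$ be the time-one flow of the (compactly supported, hence complete) contact vector field on $(\mathbb{R}^5,dz-ydx-pdq)$ generated by $H_\tau(x,y,z,q,p)=h_\tau(x,y,z)\,\rho(p)\,\sigma(q)$.

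The properties not involving norms follow quickly. Since $\mathrm{supp}\,H_\tau\subset\pi_{xyz}(\mathcal{V}_k)\times\mathrm{supp}\,\sigma\times\mathrm{supp}\,\rho\subset\mathcal{V}_k$, the contactomorphism $\Psi_k'$ is supported in $\mathcal{V}_k$. Using the standard formula for the contact vector field of a Hamiltonian, one checks that wherever $\rho$ and $\sigma$ are locally constant — in particular on an open set containing $\{p=0,\ |q|\le1\}$ — the field $X_{H_\tau}$ is tangent to $\{p=0\}$, has vanishing $\partial_q$-component, and agrees with the $\mathbb{R}^3$-contact field $X_{h_\tau}$ in the $(x,y,z)$-directions; hence $\{p=0,\ |q|\le1\}$ is invariant and on it $\Psi_k'(x,y,z,q,0)=(\phi^1(x,y,z),q,0)$. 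From $\phi^1\circ\gamma_{m_k}=\gamma_{m_k}$ this gives $\Psi_k'\circ\Lambda_k=\Lambda_k$, and combining $\mathrm{Im}\,\Sigma\subset\phi^1(\mathcal{W})$ with $\mathcal{W}\times[-1,1]\times\{0\}\subset\Phi_{k-1}(\mathcal{U}_{k+1})$ yields
\[\mathrm{Im}\,\Lambda_\infty\subset\mathrm{Im}\,\Pi=\mathrm{Im}\,\Sigma\times[-1,1]\times\{0\}\subset\phi^1(\mathcal{W})\times[-1,1]\times\{0\}=\Psi_k'\bigl(\mathcal{W}\times[-1,1]\times\{0\}\bigr)\subset\Psi_k'\circ\Phi_{k-1}(\mathcal{U}_{k+1}).\]

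The remaining estimate $d_{C^0}(\Psi_k',\mathrm{Id})<10/2^k$ is where the work is. As $\Psi_k'$ is supported in $\mathcal{V}_k$ it preserves $\mathcal{V}_k$, so the $x$-, $y$- and $p$-displacements are automatically $O(1/m_k)$, and only the $z$- and $q$-directions — in which $\mathcal{V}_k$ has size $\approx 2$ — require an argument. The key input is that the contact Hamiltonian $h_\tau$ of the transverse isotopy $\Gamma_\tau=\widetilde{\Sigma}(\cdot,\lambda(\cdot,\tau))$ is of order $1/m_k^2$: along $\widetilde{\Sigma}$ the $z$-velocity is $O(1/m_k^2)$ and the $y\,dx$-contribution to $\alpha(\partial_\tau\Gamma_\tau)$ is $O(1/m_k)\cdot O(1/m_k)$ because $|y|=O(1/m_k)$ on the support, so $\|h_\tau\|_{C^0}=O(1/m_k^2)$ (the ambient extension of Lemma~\ref{IsotopyExtensionLemma} preserves this order) while the relevant first derivatives of $h_\tau$ are of order at most $1/m_k$. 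Now the $\partial_q$-component of $X_{H_\tau}$ is $-h_\tau\,\rho'(p)\,\sigma(q)$, so despite $\|\rho'\|_{C^0}=O(m_k)$ one still gets $|\Delta q|\le\|h_\tau\|_{C^0}\,\|\rho'\|_{C^0}=O(1/m_k)$; the $\partial_z$-component is $(h_\tau-yh_{\tau,y})\rho\sigma-p\,h_\tau\,\rho'\,\sigma$, whose first summand contributes at most $\mathrm{size}\,\phi^\tau<10/m_k$ to $\Delta z$ by Claim~\ref{StreachingLemma}(5) and whose second summand is $O(1/m_k)\cdot O(1/m_k^2)\cdot O(m_k)=O(1/m_k^2)$, so $|\Delta z|=O(1/m_k)$ as well. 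Thus $d_{C^0}(\Psi_k',\mathrm{Id})=O(1/m_k)$, and choosing $m_k$ at the previous inductive step to exceed a suitable absolute multiple of $2^k$ — consistent with the conditions imposed on $\{m_i\}$, namely (\ref{ConditionOn(m_i)}) together with $m_{k+1}>2m_k$ — gives the stated bound. I expect this last estimate to be the main obstacle: one must genuinely exploit that $\phi^\tau$ is generated by an $O(1/m_k^2)$-small Hamiltonian, which is exactly why $\widetilde{\Sigma}$ is engineered so that the isotopy stays in the thin slab $\{|y|=O(1/m_k)\}$ with $z$-speed $O(1/m_k^2)$ — a mere $O(1/m_k)$ bound on the generating Hamiltonian would be destroyed by the $p$-cutoff that is forced by the requirement that the lift be compactly supported in $\mathcal{V}_k$.
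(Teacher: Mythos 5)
Your verification of the qualitative properties is fine, and up to that point you are essentially re-deriving the paper's own route: the paper proves this claim by combining Claim~\ref{StreachingLemma} with Lemma~\ref{LemmaFromR3toR5}, i.e.\ by lifting the planar isotopy $\phi^\tau$ to $\mathbb{R}^5$ with cutoffs in $q$ and $p$, exactly as you set out to do; your computation that the lifted flow restricts to $\phi^\tau\times\mathrm{Id}$ on $\{p=0,\ |q|\le 1\}$, giving $\Psi_k'\circ\Lambda_k=\Lambda_k$ and $\mathrm{Im}\,\Pi\subset\Psi_k'\circ\Phi_{k-1}(\mathcal{U}_{k+1})$, matches what Lemma~\ref{LemmaFromR3toR5} delivers. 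The divergence, and the genuine gap, is in the $C^0$ estimate. Your bound on $|\Delta q|$ rests entirely on the assertion $\|h_\tau\|_{C^0}=O(1/m_k^2)$ for a contact Hamiltonian generating the isotopy of Claim~\ref{StreachingLemma}. Nothing gives you this: Claim~\ref{StreachingLemma} controls only supports and sizes, and the extension produced via Lemma~\ref{IsotopyExtensionLemma} (or Geiges' isotopy extension) comes with no norm control whatsoever --- the statement that the extension ``preserves this order'' is not a property of that lemma. Your heuristic only constrains $\alpha(\partial_\tau\Gamma_\tau)$ \emph{along the moving curve}; the value of $h_\tau$ on the rest of its support (a tubular neighbourhood whose embedded radius is in fact forced to be $O(1/m_k^2)$ by the spiral geometry of $\gamma_{m_k}$, with correspondingly large cutoff derivatives) is unconstrained by anything you quote, and an arbitrary $\phi^\tau$ satisfying the conclusions of Claim~\ref{StreachingLemma} can be generated by an arbitrarily large Hamiltonian. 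Since in your scheme $\dot q=-h_\tau\,\rho'(p)\,\sigma(q)$ with $\|\rho'\|_{C^0}\sim m_k$, the $q$-displacement is completely uncontrolled without that norm bound, and this is precisely the coordinate in which your support has extent $\approx 2$, so no support argument rescues it.

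The paper avoids Hamiltonian-norm estimates altogether: in Lemma~\ref{LemmaFromR3toR5} the lift is taken with a $q$-\emph{dependent time reparametrization} $\lambda(q,t)$, frozen on interleaved sets $Q_0\times[0,1/2]$ and $Q_1\times[1/2,1]$, so that during each half of the time interval the support of the lifted isotopy splits into connected components of $q$-extent $<\delta$ (and $p$-extent $<2\delta$, and $(x,y,z)$-extent controlled by Claim~\ref{StreachingLemma}(5)); then Lemma~\ref{IsotopyExtensionLemma} bounds the displacement purely by these component diameters. To repair your argument you would either have to invoke Lemma~\ref{LemmaFromR3toR5} as the paper does, or go back inside the proof of Claim~\ref{StreachingLemma} and construct the ambient extension by hand with quantitative bounds on $h_\tau$ \emph{and} its first derivatives compatible with the cutoffs --- possible in principle (using that $\alpha$ annihilates $\partial_w\widetilde{\Sigma}$), but it is real additional work that the proposal asserts rather than supplies; your $\Delta z$ estimate also tacitly needs the connected-component argument for the $(x,y,z)$-projection of the modified flow, not merely Claim~\ref{StreachingLemma}(5).
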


\begin{proof}
    \noindent Since $\Pi(t,s,0)=\Lambda_k(t,s)$, there exists $w_0\in (0,1)$ such that
    \[\Pi([-1,1]^2\times[0,w_0])\subset\Phi_{k-1}(\mathcal{U}_{k+1}).\]
    
    \noindent Let $\Gamma_{\tau}:[-1,1]^2\times[0,1]\rightarrow\mathbb{R}^5,\tau\in[0,1]$ be an isotopy of embeddings defined as
    \[\Gamma_{\tau}(t,s,w)=\Pi\big(t,s,w+\lambda_{\tau}(t,s)(w/w_0-w)\big)=(\Sigma(t,w+\lambda_{\tau}(t,s)(w/w_0-w)),s,0),\]
    \noindent where $\lambda_{\tau}$ is the function from Lemma~\ref{lambda_function_lemma} for some $\varepsilon>0$ and sets $\mathcal{A}_i$. The isotopy $\Gamma_{\tau}$ maps $\Pi([-1,1]^2\times[0,w_0])$ to $\Pi([-1,1]^2\times[0,1])$ by flowing along the lines $w\mapsto\Pi(\cdot,\cdot,w)$. Its support satisfies $\mathrm{supp}\,\Gamma_{\tau} \subset Op(\Pi((\mathrm{supp}\,\lambda_{\tau})\times[0,1]))$. By the properties of $\lambda_{\tau}$, for each $i\in\{0,1,2,3\}$ and $\tau\in[i/4,(i+1)/4]$ we have
    \[
        \mathrm{supp}_{\tau\in[i/4,(i+1)/4]}\,\Gamma_{\tau} \subset \mathcal{B}_i,
    \]
    \noindent where $\mathcal{B}_i = Op(\Pi(\mathcal{A}_i\times[0,1])) \subset \mathbb{R}^5$ is a disjoint union of sets with diameter $\leq c/2^k$ when $\varepsilon>0$ is sufficiently small.
    
    \noindent The final step consists of extending $\Gamma_{\tau}$ to an ambient contact isotopy $\phi_{\tau}$ generated by a contact Hamiltonian $H_{\tau}$ - once this is achieved, the construction will be complete. Indeed, after applying a cut-off, we may assume that for each $i\in\{0,1,2,3\}$ and $\tau\in[i/4,(i+1)/4]$, both $H_{\tau}$ and $\phi_{\tau}$ are supported in $\mathcal{B}_i$. The triangle inequality then yields $d_{C^0}(\phi_{1},\mathrm{Id}) < 4c/2^k$, and defining $C=4c$ with $\Psi'_k=\phi_1$ finishes the argument.\\
    
    \noindent Let us show that there exists a contact Hamiltonian function $H_{\tau}\in C^{\infty}_c(\mathbb{R}^5)$ such that $X_{H_{\tau}}|_{\Gamma_{\tau}}=\frac{\partial}{\partial\tau}\Gamma_{\tau}$. The vector field $\frac{\partial}{\partial\tau}\Gamma_{\tau}$ is given by
    \begin{align*}
        \frac{\partial}{\partial\tau}\Gamma_{\tau} =\; & \frac{\partial \lambda_{\tau}}{\partial \tau} \left( \frac{w}{w_0} - w \right) \cdot \Bigg(
        \frac{\cos m_k^2 t - \sin m_k^2 t}{m_k} \cdot \frac{\partial}{\partial x}
        - \frac{\cos m_k^2 t + \sin m_k^2 t}{m_k} \cdot \frac{\partial}{\partial y} \\
        & + \frac{1 - \left( w + \lambda_{\tau}\cdot\left( \frac{w}{w_0} - w \right)\right)}{m_k^2} \cos(2m_k^2 t) \cdot \frac{\partial}{\partial z}
    \Bigg).
    \end{align*}

    \noindent Let $\beta=dz-ydx-pdq$ be the standard contact form on $\mathbb{R}^5$. Recall that the contact Hamiltonian vector field $X_H$ satisfies equations $\beta(X_{H_{\tau}})=H_{\tau}$ and $i_{X_{H}}d\beta=dH_{\tau}(R_{\beta})\beta-dH_{\tau}$, where $R_{\beta}=\frac{\partial}{\partial z}$ denotes the Reeb vector field associated with $\beta$. In coordinates one can write $X_{H_{\tau}}$ as
    \begin{align*}
        X_{H_{\tau}} =\; & -\frac{\partial H_{\tau}}{\partial y} \frac{\partial}{\partial x}
        + \left( y \frac{\partial H_{\tau}}{\partial z} + \frac{\partial H_{\tau}}{\partial x} \right) \frac{\partial}{\partial y}
        + \left( H_{\tau} - y \frac{\partial H_{\tau}}{\partial y} - p \frac{\partial H_{\tau}}{\partial p} \right) \frac{\partial}{\partial z} \\
        & - \frac{\partial H_{\tau}}{\partial p} \frac{\partial}{\partial q}
        + \left( p \frac{\partial H_{\tau}}{\partial z} + \frac{\partial H_{\tau}}{\partial q} \right) \frac{\partial}{\partial p}.
    \end{align*}

    \noindent Since $\beta\left(\frac{\partial}{\partial\tau}\Gamma_{\tau}\right)=0$, it suffices to ensure that the function $H_{\tau}$ satisfies along $\Gamma_{\tau}$:
    \begin{enumerate}[label=(\roman*)]
        \item $H_{\tau}\circ\Gamma_{\tau}\equiv 0$,
        \item $\displaystyle dx\left(\frac{\partial \Gamma_{\tau}}{\partial \tau}\right)=\frac{\partial \lambda_{\tau}}{\partial \tau}\left(\frac{w}{w_0} - w\right) \cdot 
    \frac{\cos m_k^2 t - \sin m_k^2 t}{m_k}=-\frac{\partial H_{\tau}}{\partial y}$,
        \item $\displaystyle dy\left(\frac{\partial \Gamma_{\tau}}{\partial \tau}\right)=\frac{\partial \lambda_{\tau}}{\partial \tau}\left(\frac{w}{w_0} - w\right) \cdot 
    \frac{\cos m_k^2 t + \sin m_k^2 t}{m_k}=-y\frac{\partial H_{\tau}}{\partial z}-\frac{\partial H_{\tau}}{\partial x}$,
        \item $\displaystyle dz\left(\frac{\partial \Gamma_{\tau}}{\partial \tau}\right)=\frac{\partial \lambda_{\tau}}{\partial \tau}\left(\frac{w}{w_0} - w\right) \cdot \frac{1-\left(w+\lambda_{\tau}\left(\frac{w}{w_0}-w\right)\right)}{m_k^2}\cos 2m_k^2t=-y\frac{\partial H_{\tau}}{\partial y}$,
        \item $\displaystyle \frac{\partial H_{\tau}}{\partial q}=\frac{\partial H_{\tau}}{\partial p}=0$.
    \end{enumerate}

    \noindent Note that (ii) implies (iv), therefore it remains to show that there exists $H_{\tau}$ which satisfies (i),(ii),(iii) and (v). Along the image of $\Gamma_{\tau}$ we have
    \begin{equation}\label{obstruction}
        (\cos m_k^2t-\sin m_k^2t)\Big(\frac{\partial}{\partial x}+y\frac{\partial}{\partial z}\Big)-(\sin m_k^2t+\cos m_k^2t)\frac{\partial}{\partial y}=\frac{m_k}{1+(1/w_0-1)\lambda_{\tau}}\frac{\partial\Gamma_{\tau}}{\partial w}
    \end{equation}
    \noindent Condition (i) implies that $dH_{\tau}(\partial\Gamma_{\tau}/\partial w)=0$, so applying $dH_{\tau}$ on the right-hand side of the above equation yields $0$. This is consistent with conditions (ii) and (iii) because
    \[
    (\cos m_k^2 t - \sin m_k^2 t) dy\left(\frac{\partial \Gamma_\tau}{\partial \tau}\right) 
    - (\sin m_k^2 t + \cos m_k^2 t) dx\left(\frac{\partial \Gamma_\tau}{\partial \tau}\right) = 0.
    \]

    \noindent The vectors 
    \[
    V_1 = \left(\frac{\partial}{\partial x} + y\frac{\partial}{\partial z}\right),\ 
    V_2 = \frac{\partial}{\partial y},\ 
    V_3 = \frac{\partial \Gamma_\tau}{\partial w},\ 
    V_4 = \frac{\partial \Gamma_\tau}{\partial t},\ 
    V_5 = \frac{\partial}{\partial q},\ 
    V_6 = \frac{\partial}{\partial p}
    \]
    \noindent span the tangent space $T_{\Gamma_\tau(t,s,w)}\mathbb{R}^5$. Therefore, beyond (\ref{obstruction}), there are no additional obstructions to defining $H_\tau$.\\

    \noindent For instance, we can define $H_\tau$ in a neighborhood of $\mathrm{Im}\,\Gamma_\tau$ using the exponential map:
    \[
        H_\tau\left(\mathrm{exp}_{\Gamma_\tau(t,s,w)}(a_1V_1 + \cdots + a_5V_5)\right) 
        = -a_1 \cdot dx\left(\frac{\partial \Gamma_\tau}{\partial \tau}\right) 
        - a_2 \cdot dy\left(\frac{\partial \Gamma_\tau}{\partial \tau}\right),
    \]
    \noindent where $a_1,\ldots,a_5$ are near $0$. One verifies directly that $H_\tau$ satisfies all required properties.
\end{proof}

\subsection*{Step 2:}

\noindent\textit{We start by picking an open neighbourhood $\mathcal{N}=Op(\mathrm{Im}\,\Pi)\subset\Psi'_k\circ\Phi_{k-1}(\mathcal{U}_{k+1})$. Then we choose $m_{k+1}$ large enough, so that we can find a Legendrian isotopy supported inside $\mathcal{N}$ which takes $\Lambda_{k+1}(t,s)=(\gamma_{m_{k+1}}(t),s,0)$ arbitrary $C^0$-close to $\Lambda_{k}$. Lastly, we extend this isotopy to get a $C^0$-small contactomorphism $\Psi''_k$ supported inside $\mathcal{N}$, and denote $\widetilde{\Lambda}_{k+1}=\Psi''_k\circ\Lambda_{k+1}$.}\\

\noindent Let $\mathcal{N}_1\subset\pi_{xyz}(\mathcal{V}_{k})$ and $\mathcal{N}_2\subset\pi_{qp}(\mathcal{V}_{k})$ be open subsets such that
\[\mathrm{Im}\,\Pi=\mathrm{Im}\,\Sigma\times[-1,1]\times\{0\}\subset\mathcal{N}:=\mathcal{N}_1\times\mathcal{N}_2\subset\Psi_k'\circ\Phi_{k-1}(\mathcal{U}_{k+1}).\]

\noindent For every $M\in\mathbb{N}$ let
\begin{equation}\label{EqForMathcal(T)_M}
    \mathcal{T}_{M}=\{t_1,\ldots,t_{|\mathcal{T}_M|}\}=\Big\{\frac{(8k+1)\pi}{4M^2}\mid k\in\mathbb{Z}\Big\}\cap(-1,1)
\end{equation}
be the subset where the front projection of $\gamma_M$ intersects the $z$-axis, such that $t_i<t_j$ for $i<j$, and there is a single zig-zag in the front projection of $\gamma_M$ on each interval $(t_i,t_{i+1})$.

\begin{claim}\label{Claim_Psi''_k}
    Fix $\varepsilon>0$. For $M\in\mathbb{N}$ large enough, there exists a contactomorphism $\Psi_k''$ supported inside $\mathcal{N}$ such that $d_{C^0}(\Psi''_k,\mathrm{Id})<C/2^k$ and $\Psi''_k(\gamma_M(t),s,0)=(\widetilde{\gamma}_M(t),s,0)$, where $\widetilde{\gamma}_{M}:[-1,1]\rightarrow\mathbb{R}^3$ is a curve that satisfies
    \begin{itemize}
        \item $d_{C^0}(\widetilde{\gamma}_{M},\gamma_{m_k})<\varepsilon$,
        \item $(\forall 1\leq i\leq |\mathcal{T}_M|-1)$ there is a single zig-zag in the front projection of $\psi^{\tau}\circ\gamma_M$ on the interval $(t_i,t_{i+1})$. 
    \end{itemize}
\end{claim}

\begin{proof}    
    For $\epsilon>0$ let $\lambda_{\tau}$ be a function defined by the Lemma~\ref{lambda_function_lemma}. Define an isotopy $\Theta_{\tau}:[-1,1]^2\rightarrow\mathbb{R}^5,\,\tau\in[0,1]$ as
    \[\Theta_{\tau}(t,s)=\Pi(t,s,\lambda_{\tau}(t,s))=(\Sigma(t,\lambda_{\tau}(t,s)),s,0).\]
    \noindent The isotopy $\Theta_{\tau}$ satisfies $\Theta_0=\Lambda_k$ and $\Theta_1=\Lambda_{\infty}$, and for small $\epsilon>0$ decomposes into two \textit{isotopies by parts} (Definition~\ref{isotopy_by_parts}) on $[0,1/2]$ and $[1/2,1]$. We will $C^0$-approximate $\Theta_{\tau}$ by a Legendrian isotopy $\Gamma_{\tau}$ (which is concatenation of two isotopies by parts as well) and conclude the proof via Lemma~\ref{IsotopyExtensionLemma} (applied twice).\\

    \noindent The strategy is to first approximate the family of curves $\Sigma(t,\lambda_{\tau}(t,s))$ by Legendrian curves, then lift them to a Legendrian embedding in $\mathbb{R}^5$ using: \textit{For a Legendrian family $\gamma_s(t)=(x_s(t),y_s(t),z_s(t))$ in $(\mathbb{R}^3,\ker(dz-ydx))$ where $y_s=\partial_t z_s/\partial_t x_s$, the map
    \[(t,s)\mapsto \left(\gamma_s(t),s,\alpha\left(\tfrac{d}{ds}\gamma_s(t)\right)\right)\]
    \noindent gives a Legendrian embedding in $(\mathbb{R}^5,\ker(dz-ydx-pdq))$.}\\

    \noindent Define the coordinate functions 
    \[
    (x_{\tau}(t,s), y_{\tau}(t,s), z_{\tau}(t,s)) := \Sigma(t, \lambda_{\tau}(t,s)).
    \]
    We need to construct a Legendrian approximation $\Gamma_{\tau} = (X_{\tau}, Y_{\tau}, Z_{\tau})$ satisfying:
    \begin{enumerate}
        \item The Legendrian condition $\partial_t Z_{\tau} = Y_{\tau}\partial_t X_{\tau}$,
        \item $d_{C^0}(\Gamma_{\tau}, (x_{\tau}, y_{\tau}, z_{\tau})) \leq \delta$,
        \item Isotopy-by-parts structure,
        \item The contact angle bound $|\alpha(\frac{\partial}{\partial s}\Gamma_{\tau})|=|\partial_s Z_{\tau} - Y_{\tau}\partial_s X_{\tau}| < \delta$.
    \end{enumerate}

    \noindent This construction yields, through the lifting procedure explained above, a Legendrian isotopy $\widetilde{\Lambda}_{\tau}$ supported in $Op(\mathrm{Im}\,\Pi)\subset\mathcal{N}$ when $\delta$ is small enough.\\

    \noindent To complete the construction, we need an isotopy $\Gamma_{\tau}(t,s)$ ($\tau\in[0,1]$) satisfying conditions $1$--$4$. The proof uses Legendrian approximation via front projection zig-zags (Theorem 2.44 in \cite{Ge08}): \textit{Decompose curve $t\mapsto (x(t),y(t),z(t))$ into segments $[t_i,t_{i+1}]$, then $C^0$-approximate the front projection $(x(t),z(t))$ on each segment by zig-zags with slopes near $y(t)$.}\\

    \noindent For each $i\in\{1,2,\ldots,\mathcal{T}_{M}\}$, define the base points and their offsets:
    \begin{equation}
        P_i = (x_\tau(t_i,s), z_\tau(t_i,s)), \quad 
        Q_i^\pm = P_i \pm d(1, y_\tau(t_i,s)).
    \end{equation}

    \noindent The zig-zag path $ P_i \to Q_i^+ \to Q_{i+1}^- \to P_{i+1}$ has segments with slopes:
    \begin{equation}
    \begin{aligned}
        \text{(1) } P_i \to Q_i^+ &: m_1 = y_i \\
        \text{(2) } Q_i^+ \to Q_{i+1}^- &: m_2 = \frac{\Delta z_i - d(y_i + y_{i+1})}{\Delta x_i - 2d} \\
        \text{(3) } Q_{i+1}^- \to P_{i+1} &: m_3 = y_{i+1}
    \end{aligned}
    \end{equation}
    where $\Delta x_i = x_{i+1}-x_i$, $\Delta z_i = z_{i+1}-z_i$, and $y_j \equiv y_\tau(t_j,s)$. As $M \to +\infty$, we have $\Delta x_i, \Delta y_i, \Delta z_i \to 0$ with $||P_iQ_i^{\pm}|| \to d$ and $||Q_i^+Q_{i+1}^-|| \to 2d$. For sufficiently small $d>0$ and large $M$, the piecewise linear zig-zag lies within $\mathcal{N}_1$, $C^0$-approximates the front projection $(x_\tau,y_\tau,z_\tau)$, and maintains slopes near $y_\tau(t,s)$. Finally, we want to modify linear paths between the vertices, so that the singularities at vertices turn into cusp singularities, yielding a Legendrian $\Gamma_\tau(t,s)$ satisfying properties $1$--$4$.\\

    \noindent Our aim is to construct a model for a curve interpolating between points $P,Q \in \mathbb{R}^2$ with target slopes $k_P$ and $k_Q$ at endpoints. In order to do so, we fix a model function $F(a,b,t)$ satisfying $F(a,b,0) = F(a,b,1) = 0$ with $\partial_t F(a,b,0) = a$ and $\partial_t F(a,b,1) = b$ (note that $F$ can be a polynomial).Then the interpolating curve is given by:
    \[
    \Gamma(P,Q,k_P,k_Q; t) = P + \|PQ\| \cdot R_\theta\left(t, F\left(\frac{k_P - \tan\theta}{1 + k_P\tan\theta}, \frac{k_Q - \tan\theta}{1 + k_Q\tan\theta}, t\right)\right)
    \]
    \noindent where $R_\theta$ denotes rotation by angle $\theta=\angle(Q-P)$. The curve satisfies $\Gamma(0) = P$ with slope $k_P$ and $\Gamma(1) = Q$ with slope $k_Q$. Finally, we define $\Gamma_{\tau}$ on the interval $[t_i,t_{i+1}]$ as
    \begin{equation}
    \begin{aligned}
        \text{(1) } P_i \to Q_i^+ &: (X_{\tau}(t,s),Z_{\tau}(t,s))=\Gamma(P_i,Q^+_i,m_1,m_1;v_1(t)) \\
        \text{(2) } Q_i^+ \to Q_{i+1}^- &: (X_{\tau}(t,s),Z_{\tau}(t,s))= \Gamma(Q^+_i,Q_{i+1}^-,m_1,m_2,v_2(t)) \\
        \text{(3) } Q_{i+1}^- \to P_{i+1} &: (X_{\tau}(t,s),Z_{\tau}(t,s))=\Gamma(Q_{i+1}^-,P_{i+1},m_2,m_2,v_3(t))
    \end{aligned}
    \end{equation}
    \noindent where $v_1,v_2,v_3$ are time reparametrizations, and $Y_{\tau}(t,s)$ satisfies the Legendrian condition. Note that the properties 2. and 3. are satisfied when and $d$ is small enough and $M\gg\frac{1}{d}$ is large enough. Moreover, for $t\in[t_i,t_{i+1}]$ we have
    \begin{equation*}
        \begin{aligned}
        \alpha\left(\frac{\partial}{\partial s}\Gamma_{\tau}(t,s)\right) 
        = \frac{\partial}{\partial s}Z_{\tau} - Y_{\tau}\frac{\partial}{\partial s}X_{\tau} 
        & = \frac{\partial}{\partial s}z_{\tau}(t_i,s) - y_{\tau}(t_i,s)\frac{\partial}{\partial s}x_{\tau}(t_i,s) + \mathcal{O}(d)\\
        & = \alpha\left(\frac{\partial}{\partial s}\Sigma(t,\lambda_{\tau}(t,s))\right)+\mathcal{O}(d) 
        = \mathcal{O}(d),
        \end{aligned}
    \end{equation*}
    \noindent so the property 4. follows provided $d$ is small enough.
\end{proof}
\noindent We end this step by defining $m_{k+1}$ large enough, so that the above claim is satisfied for $M=m_{k+1}$ and $\varepsilon=1/2^{k+1}$.

\subsection*{Step 3:}

\noindent\textit{This is the final, and most important step, where we construct a $C^0$-small contactomorphism between $\Lambda_k(t,s)=(\gamma_{m_k}(t),s,0)$ and $\widetilde{\Lambda}_{k+1}(t,s)=(\widetilde{\gamma}_{m_{k+1}}(t),s,0)$. Note that front projection of $\Lambda_{k}$ consists of many "equally-distributed" zig-zag strips, so the aim of this step is to find a $C^0$-small contactomorphism $\Psi$ which creates additional zig-zag strips in the front projection of $\Lambda_k$, so that $\Delta:=\Psi\circ\Lambda_k$ and $\widetilde{\Lambda}_{k+1}$ end up having the same number of zig-zag strips in the front projection.}\\

\noindent Using equation (\ref{CuspsSet}) we get
\[\#(\text{right cusps in the front projection of }\gamma_m)=1+\Big\lfloor\frac{m^2}{2\pi}-\frac{3}{8}\Big\rfloor+\Big\lfloor\frac{m^2}{2\pi}+\frac{3}{8}\Big\rfloor\]
\[\#(\text{left cusps in the front projection of }\gamma_m)=1+\Big\lfloor\frac{m^2}{2\pi}-\frac{1}{8}\Big\rfloor+\Big\lfloor\frac{m^2}{2\pi}+\frac{1}{8}\Big\rfloor.\]
\noindent Since the sequence $\frac{m^2}{2\pi}\,(\mathrm{mod} \,1)$ is equidistributed in $[0,1)$, we may assume that all $m_k$ that we choose satisfy $\frac{m_k^2}{2\pi}\,(\mathrm{mod} \,1)\in(0,1/8)$, and hence the number of left cusps and the right cusps of $\gamma_{m_k}$ is even and equal to $2\lfloor m^2/(2\pi)\rfloor$. Let 
\begin{equation}\label{Eq{T1,...,Tl}}
    \{T_1,T_2,\ldots,T_{l}\}\subset\mathcal{T}_{m_{k+1}}\text{ (see (\ref{EqForMathcal(T)_M}) for the definition of $\mathcal{T}_{m_{k+1}}$)}
\end{equation}

\noindent be the subset of points such that $T_1<T_2<\ldots <T_l$, and  $\gamma_{m_k}$ has either 4 or 6 cusps in the front projection on each interval in the collection
\begin{equation}\label{EqMathcal{I}Defn}
    \mathcal{I}:=\{(0,T_1),(T_1,T_2),\ldots,(T_l,1)\}
\end{equation}

\begin{figure}[h]
    \centering
    \includegraphics[scale=0.9]{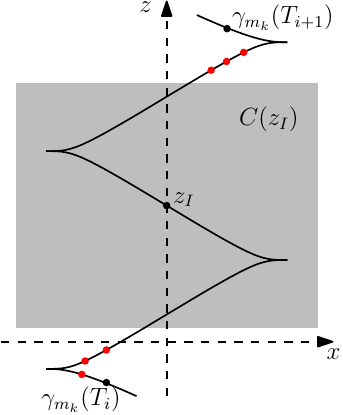}
    \caption{Front projection of $\gamma_{m_k}$; red dots represent $\mathrm{ZigZags}(I)$}
    \label{ZigZagSketch}
\end{figure}

\noindent For each interval $I=(T_i,T_{i+1})\in\mathcal{I}$ we pick a point $z_{I}\in I$ such that both intervals $(T_i,z_I)$ and $(z_I,T_{i+1})$ contain at least 2 cusps, and such that the front of $\gamma_{m_k}$ intersects the $z$-axis at the point $z_I$ (see \ref{EqZ-axisIntersection} for description of $z_I$). Moreover, denote by $C(z_I)\subset\mathbb{R}^3$ the open neighbourhood as in subsection \ref{SectionLooseCharts}, and denote

\begin{equation}\label{EqMathcal{C}Defn}
    \mathcal{C}:=\bigcup_{I\in\mathcal{I}}C(z_{I})
\end{equation}

\noindent Additionally, for each interval $I\in\mathcal{I}$ we pick a finite set of points $\mathrm{ZigZags}(I)\subset I$, such that $\gamma_{m_k}(\mathrm{ZigZags}(I))\cap C(z_I)=\emptyset$ and cardinality of the set $\mathrm{ZigZags}(I)$ is equal to the difference between number of right cusps of $\widetilde{\gamma}_{m_{k+1}}$ and $\gamma_{m_k}$ on the interval $I$, and denote

\begin{equation}\label{EqZigZags}
    \mathrm{ZigZags}:=\bigcup_{I\in\mathcal{I}}\mathrm{ZigZags}(I)
\end{equation}

\noindent For each $t\in\mathrm{ZigZags}$ let $\mathcal{U}_t=Op(\gamma_{m_k}(t))\subset\mathbb{R}^3$ and denote
\begin{equation}\label{EqMathcal{U}}
    \mathcal{U}=\bigsqcup_{t\in\mathrm{ZigZags}}\mathcal{U}_t
\end{equation}

\noindent Define a Legendrian curve
\[\chi:[-1,1]\rightarrow Op(\mathrm{Im}\,\gamma_{m_k})\subset\mathbb{R}^3\]
\noindent which has the same front projection as $\gamma_{m_k}$, except at the neighbourhood of points in $\mathrm{ZigZags}$, where for each $t\in\mathrm{ZigZags}$ $\chi$ has a zig-zag with image in $\mathcal{U}_t$. Therefore, Legendrian $\chi$ satisfies
\begin{itemize}
    \item $\chi$ and $\widetilde{\gamma}_{m_{k+1}}$ have the same number of left and right cusps (in the front projection) on each interval from the collection $\mathcal{I}$.
    \item the front projection of $\chi$ has no self-intersections.
\end{itemize}

\noindent Finally, define a Legendrian embedding $\Delta:[-1,1]^2\rightarrow\mathbb{R}^5$ as
\begin{equation}\label{EquationDelta}
    \Delta(t,s)=(\chi(t),s,0)
\end{equation}

\begin{claim}
    There exists a contactomorphism $\Psi_k'''$ compactly supported inside $\mathcal{N}$ such that $d_{C^0}(\Psi'''_k,\mathrm{Id})<C/2^k$ and $\Psi'''_k\circ\widetilde{\Lambda}_{k+1}=\Delta$.
\end{claim}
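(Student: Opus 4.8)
The plan is to reduce the statement to a question about Legendrian arcs in $\mathbb{R}^3$ and then to build the contactomorphism interval by interval over the collection $\mathcal{I}$, using the loose charts $C(z_I)\subset\mathcal{C}$ to reconcile the zig-zags of $\widetilde{\gamma}_{m_{k+1}}$ with those of $\chi$.

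Since both $\Delta(t,s)=(\chi(t),s,0)$ and $\widetilde{\Lambda}_{k+1}(t,s)=(\widetilde{\gamma}_{m_{k+1}}(t),s,0)$ are products of the $\mathbb{R}^2$-factor with a Legendrian arc in $\mathbb{R}^3$, Lemma \ref{LemmaFromR3toR5} reduces the claim to constructing a contact isotopy $\{\eta^{\tau}\}_{\tau\in[0,1]}$ of $\mathbb{R}^3$ with $\eta^{0}=\mathrm{Id}$, supported in $\mathcal{N}_1$, such that $\eta^1\circ\widetilde{\gamma}_{m_{k+1}}=\chi$ and $\mathrm{size}_{\tau\in[0,1/2]}\eta^{\tau}+\mathrm{size}_{\tau\in[1/2,1]}\eta^{\tau}<10/m_k$; the induced contactomorphism $\Psi_k'''$ is then supported in $\mathcal{N}=\mathcal{N}_1\times\mathcal{N}_2$, is $C^0$-small, and satisfies $\Psi_k'''\circ\widetilde{\Lambda}_{k+1}=\Delta$. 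So it remains to move the arc $\widetilde{\gamma}_{m_{k+1}}=\psi^1\circ\gamma_{m_{k+1}}$ to $\chi$ inside $\mathbb{R}^3$.

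Both arcs coincide with $\gamma_{m_k}$ near every point of $\{T_1,\dots,T_l\}\subset\mathcal{T}_{m_{k+1}}$: for $\widetilde{\gamma}_{m_{k+1}}$ this is part (1) of Claim \ref{ClaimApproxIsotopy}, and for $\chi$ it holds because $\chi=\gamma_{m_k}$ away from the small balls $\mathcal{U}_t$, $t\in\mathrm{ZigZags}$, which may be taken disjoint from $Op(\mathcal{T}_{m_{k+1}})$. Hence $\eta^{\tau}$ can be assembled from isotopies supported over the individual intervals $I\in\mathcal{I}$ and fixed near $\partial I$. Fix such an $I$. By the choice of $\mathrm{ZigZags}(I)$ in (\ref{EqZigZags}) and the cusp count preceding it, $\widetilde{\gamma}_{m_{k+1}}|_I$ and $\chi|_I$ are $C^0$-close Legendrian arcs with equal numbers of left cusps and of right cusps. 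On $I$ the isotopy is produced in three moves, each a $C^0$-small Legendrian isotopy of small size supported in $\mathcal{N}_1$. First, slide and consolidate the zig-zags of $\widetilde{\gamma}_{m_{k+1}}|_I$ and $C^0$-interpolate the remainder of the arc onto $\gamma_{m_k}|_I$, so that afterwards the arc is $\gamma_{m_k}|_I$ carrying one small zig-zag over each point of $\mathrm{ZigZags}(I)$, in bijection with the zig-zags of $\chi|_I$. Second, for each zig-zag whose type disagrees with the corresponding zig-zag of $\chi|_I$, carry it into the loose chart $C(z_I)$, switch its type there by the loose-chart model isotopy recalled in subsection \ref{SectionLooseCharts}, and carry it back. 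Third, a final $C^0$-small adjustment of the front data of the zig-zags makes the arc equal $\chi|_I$.

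The main obstacle is the second move: a Legendrian isotopy that changes the type of a zig-zag is not $C^0$-small in general, and it is precisely here that the looseness of $\gamma_{m_k}$ — inherited by $\chi$ through $C(z_I)$ — must be exploited. The point is that $C(z_I)$ (supplied by subsection \ref{SectionLooseCharts}) may be taken with diameter as small as we wish and contained in a thin tube around $\mathrm{Im}\,\gamma_{m_k}|_I$ disjoint from the zig-zags, so the absorb/switch/release of a single zig-zag happens entirely within that tube and contributes $C^0$-norm of order the tube radius; likewise the slides in the first move are small because the relevant front-diameter of $\gamma_{m_k}|_I$ is $O(1/m_k)$, all displacements taking place within $I$. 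Hence the total $C^0$-norm of $\eta^{\tau}$ over $I$ is a fixed multiple of $1/m_k$ plus the (arbitrarily small) loose-chart diameter and zig-zag widths, which by (\ref{ConditionOn(m_i)}) and our freedom in the latter is $<30/2^k$. The size over $[0,1/2]$ and over $[1/2,1]$ is controlled exactly as in Claims \ref{StreachingLemma} and \ref{ClaimApproxIsotopy}: choose $T_0,T_1$ partitioning $[-1,1]$ into short intervals, perform the moves over $[-1,1]\setminus T_0$ during $\tau\in[0,1/2]$ and over $[-1,1]\setminus T_1$ during $\tau\in[1/2,1]$, so that $\mathrm{size}_{\tau\in[0,1/2]}\eta^{\tau}+\mathrm{size}_{\tau\in[1/2,1]}\eta^{\tau}<10/m_k$. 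Feeding $\{\eta^{\tau}\}$ into Lemma \ref{LemmaFromR3toR5} produces the required $\Psi_k'''$.
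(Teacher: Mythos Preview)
Your overall strategy --- reduce to $\mathbb{R}^3$ via Lemma~\ref{LemmaFromR3toR5} and work separately on each interval $I\in\mathcal{I}$, using that $\widetilde{\gamma}_{m_{k+1}}$ and $\chi$ agree near the endpoints of $I$ --- is exactly the paper's. From that point the paper's proof is a single observation: since $\chi|_I$ and $\widetilde{\gamma}_{m_{k+1}}|_I$ have the same number of left cusps and of right cusps and the same boundary germ, they are Legendrian isotopic \emph{rel} boundary inside $Op(\gamma_{m_k}(I))\subset\mathcal{N}_1$; as $\gamma_{m_k}|_I$ has at most $6$ cusps, $\mathrm{diam}\,\gamma_{m_k}(I)\leq 30/m_k$, and Lemma~\ref{LemmaFromR3toR5} finishes.

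Your ``second move'' is the problematic part: it is both unnecessary and ill-posed. Unnecessary, because you yourself have recorded that the left- and right-cusp counts of $\chi|_I$ and $\widetilde{\gamma}_{m_{k+1}}|_I$ coincide --- for Legendrian arcs in $(\mathbb{R}^3,\ker(dz-ydx))$ this is already the full isotopy invariant rel boundary, so no ``type switching'' is needed. Ill-posed, because loose charts are a phenomenon of contact dimension $\geq 5$: the sets $C(z_I)$ of subsection~\ref{SectionLooseCharts} become loose charts only after taking the product with $V_{\rho}(q_0)\subset\mathbb{R}^2(q,p)$ (Claim~\ref{LooseChartsClaim}), and that subsection contains no ``loose-chart model isotopy'' in $\mathbb{R}^3$ to invoke. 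The loose charts $C(z_I)$ are reserved for the genuinely five-dimensional argument in Proposition~\ref{PropZigZagStrips}; they play no role in the present claim. If you delete the second move, your first and third moves collapse to the paper's one-line argument, and the $T_0,T_1$ partition is likewise unnecessary here since the support over each $I$ already sits in a set of diameter $\leq 30/m_k$.
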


\begin{proof}
    Based on the properties of the curve $\chi$, we conclude that we can find a Legendrian isotopy $\Gamma_{\tau}:[-1,1]\rightarrow\mathbb{R}^3,\tau\in[0,1]$ between $\gamma_{m_{k+1}}$ and $\chi$ which additionally satisfies 
    \[\mathrm{supp}_{\tau\in[0,1]}\Gamma_{\tau}\subset\bigsqcup_{I\in\mathcal{I}}Op(\gamma_{m_k}(I))\subset\mathcal{N}_1.\]
    \noindent This isotopy translates zig-zags in the direction of $\frac{\partial}{\partial t}\gamma_{m_k}$. For every $I\in\mathcal{I}$, the diameter of the set $\gamma_{m_k}(I)$ is at most $30/m_k$ (follows from the fact that $\gamma_{m_k}(I)$ has at most 6 cusps in the front). We now finish the proof analogous to the proof of Claim \ref{Claim_Psi''_k}.
\end{proof}

\begin{prop}\label{PropZigZagStrips}
    There exists a contactomorphism $\Psi:\mathbb{R}^5\rightarrow\mathbb{R}^5$ supported in $\mathcal{N}$ with
    \begin{enumerate}
        \item $\Psi\circ\Lambda_{k}=\Delta$,
        \item $d_{C^0}(\Psi,\mathrm{Id})<100/m_k$.
    \end{enumerate}
\end{prop}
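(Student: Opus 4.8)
The plan is to build $\Psi$ as a composition $\Psi = \Psi_k''' \circ \Psi_k'' \circ \Psi_k'$, exactly the three contactomorphisms produced in Steps 1--3. Step 1 gives $\Psi_k'$ supported in $\mathcal{V}_k$ with $\Psi_k' \circ \Lambda_k = \Lambda_k$ and $\mathrm{Im}\,\Lambda_\infty \subset \mathrm{Im}\,\Pi \subset \Psi_k' \circ \Phi_{k-1}(\mathcal{U}_{k+1})$; Step 2 gives $\Psi_k''$ supported in $\mathcal{N}$ with $\Psi_k'' \circ \Lambda_{k+1} = \widetilde\Lambda_{k+1}$; Step 3 gives $\Psi_k'''$ supported in $\mathcal{N}$ with $\Psi_k''' \circ \widetilde\Lambda_{k+1} = \Delta$. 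The issue is that these chain $\Lambda_k \to \widetilde\Lambda_{k+1} \to \Delta$ through $\widetilde\Lambda_{k+1}$, not $\Lambda_k \to \Delta$ directly, so I cannot simply multiply them; I need to relate $\Lambda_k$ and $\widetilde\Lambda_{k+1}$. The point is that all of Step 2 and Step 3 was set up so that $\widetilde\Lambda_{k+1}$ and $\Delta$ and $\Lambda_k$ all live inside the same thin neighbourhood $\mathcal{N}$ of $\mathrm{Im}\,\Pi$, and $\Delta$ was \emph{constructed} (via the curve $\chi$) to have the same number of left and right cusps as $\widetilde\gamma_{m_{k+1}}$ on every interval of the collection $\mathcal{I}$.

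First I would invoke the combinatorial matching: since $\Delta$ and $\widetilde\Lambda_{k+1}$ have, on each $I \in \mathcal{I}$, the same number of cusps of each type in the front projection, and both have embedded (self-intersection-free) fronts, there is a Legendrian isotopy from $\widetilde\gamma_{m_{k+1}}$ to $\chi$ supported in $\bigsqcup_{I} Op(\gamma_{m_k}(I)) \subset \mathcal{N}_1$ — this is precisely the content of the last claim before the proposition, giving $\Psi_k'''$. So really the honest composition I want is: go from $\Lambda_k$ to the standard front $\Lambda_\infty$-guided picture, \emph{insert} the zig-zags of $\mathrm{ZigZags}$ to produce $\Delta$, where the inserted zig-zags are placed to match the cusp count of $\widetilde\gamma_{m_{k+1}}$. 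Concretely, I would instead argue: Step 3's construction of $\chi$ from $\gamma_{m_k}$ by adding zig-zags inside the disjoint charts $\mathcal{U}_t$, $t \in \mathrm{ZigZags}$, is itself realized by a Legendrian isotopy from $\gamma_{m_k}$ to $\chi$ supported in $\mathcal{U} \sqcup \mathcal{C}$ (the zig-zag birth is a standard local move inside each $\mathcal{U}_t$). That isotopy, extended to $\mathbb{R}^5$ via Lemma \ref{LemmaFromR3toR5} — each component $\mathcal{U}_t$ has diameter controlled by $10/m_k$ — produces a contactomorphism $\widehat\Psi$ supported in $\mathcal{N}$ with $\widehat\Psi \circ \Lambda_k = \Delta$ and $d_{C^0}(\widehat\Psi, \mathrm{Id}) < C/m_k$. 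I would take $\Psi := \widehat\Psi$, or equivalently the composition of the three pieces; either way property (1) holds.

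For property (2), the $C^0$-norm, I would add the estimates: $d_{C^0}(\Psi_k', \mathrm{Id}) < 10/2^k$, $d_{C^0}(\Psi_k'', \mathrm{Id}) < 10/2^k$, $d_{C^0}(\Psi_k''', \mathrm{Id}) < 30/2^k$ from the three claims, and since $2^k \le m_k$ by \eqref{ConditionOn(m_i)} this already gives $d_{C^0}(\Psi,\mathrm{Id}) \le 50/2^k \le 50/m_k < 100/m_k$ by the triangle inequality (composition of $C^0$-small maps adds $C^0$-distances). Alternatively, working directly with $\widehat\Psi$: the isotopy moving $\gamma_{m_k}$ to $\chi$ stays inside $\mathcal{U} \sqcup \mathcal{C}$, whose components each have diameter $< 30/m_k$ in the $xyz$-factor, and Lemma \ref{LemmaFromR3toR5} preserves $C^0$-smallness when passing to $\mathbb{R}^5$, so $d_{C^0}(\widehat\Psi, \mathrm{Id}) < C/m_k$ for an explicit constant below $100$.

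The main obstacle I expect is the bookkeeping that makes the cusp counts match: one must choose $m_{k+1}$ large enough that the number of cusps of $\widetilde\gamma_{m_{k+1}}$ exceeds that of $\gamma_{m_k}$ on each $I \in \mathcal{I}$ (so that $\mathrm{ZigZags}(I)$ is a genuine set of added zig-zags, not a requested deletion), and that these added zig-zags can be fit into disjoint charts $\mathcal{U}_t$ avoiding the loose charts $C(z_I)$ — this is where the equidistribution remark ($m_k^2/(2\pi) \bmod 1 \in (0,1/8)$, giving an even cusp count $2\lfloor m_k^2/(2\pi)\rfloor$) and the density of $\mathcal{T}_{m_{k+1}}$ are used. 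Once the fronts are combinatorially identical on each interval, the existence of the supported Legendrian isotopy is standard (front-projection surgery / the $h$-principle for Legendrian curves in $\mathbb{R}^3$ with prescribed front combinatorics), and extension to $\mathbb{R}^5 = \mathbb{R}^3 \times \mathbb{R}^2$ with $C^0$-control is exactly Lemma \ref{LemmaFromR3toR5}.
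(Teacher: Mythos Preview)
Your proposal has a genuine gap at its core. You assert that ``the zig-zag birth is a standard local move inside each $\mathcal{U}_t$'' and that one can find a Legendrian isotopy in $\mathbb{R}^3$ from $\gamma_{m_k}$ to $\chi$ supported in $\mathcal{U}\sqcup\mathcal{C}$, then lift it to $\mathbb{R}^5$ via Lemma~\ref{LemmaFromR3toR5}. But adding a zig-zag to a Legendrian arc in $(\mathbb{R}^3,\ker(dz-ydx))$ is \emph{not} realizable by a compactly supported Legendrian isotopy: it is a stabilization, which changes the classical invariants (relative Thurston--Bennequin and rotation numbers) of the arc rel boundary. There is no ``$h$-principle for Legendrian curves in $\mathbb{R}^3$'' of the kind you invoke; on the contrary, Legendrian curves in dimension~3 are rigid (this is precisely the content of \cite{RS22a}, cited in the introduction). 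If such a local move existed, the entire construction of the paper would be unnecessary.

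This is exactly why the paper's proof of the proposition is substantially harder and genuinely five-dimensional. The paper does \emph{not} work in $\mathbb{R}^3$ and lift; instead it builds a \emph{formal} Legendrian isotopy between $\Lambda_k$ and $\Delta$ in $\mathbb{R}^5$ (which exists only because the domain is a disc), $C^0$-approximates it by a \emph{wrinkled} Legendrian isotopy via Proposition~\ref{WrinkledLegProp}, and then resolves the wrinkle singularities using \emph{twist markings} threaded through the loose charts $\mathcal{B}_i\subset C(z_I)\times V_\rho(q_0)$ sitting on $\Lambda_k$ (Claim~\ref{ClaimPositionsOfLooseCharts} and Lemma~\ref{LemmaResolvingSingularities}). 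The $s$-direction is essential: the wrinkles and markings live in the two-dimensional Legendrian, and looseness is a phenomenon of contact dimension $\geq 5$. The careful ``isotopy by parts'' bookkeeping with the sets $\mathcal{A},\mathcal{B},\widetilde{\mathcal{A}},\widetilde{\mathcal{B}}$ is what yields the $C^0$ bound $100/m_k$ for this genuinely two-dimensional isotopy. Your route via a three-dimensional Legendrian isotopy simply does not exist. (Separately, your opening attempt to set $\Psi=\Psi_k'''\circ\Psi_k''\circ\Psi_k'$ is also off: those maps send $\Lambda_k\mapsto\Lambda_k$, $\Lambda_{k+1}\mapsto\widetilde\Lambda_{k+1}$, $\widetilde\Lambda_{k+1}\mapsto\Delta$ respectively, so no composition of them carries $\Lambda_k$ to $\Delta$; the proposition's $\Psi$ is an independent ingredient, later combined with the others to form $\Psi_k$.)
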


\noindent We finish the induction step by defining $\Psi_{k}:=(\Psi_k'')^{-1}\circ(\Psi_k''')^{-1}\circ\Psi\circ\Psi_k'$.

\section{Proof of Proposition \ref{PropZigZagStrips}}

\noindent We will get $\Psi$ by extending a Legendrian isotopy between $\Lambda_k$ and $\Delta$. However, we need to address the upper bound of the $C^0$-norm of $\Psi$, and this is achieved by extending special type of isotopy that we call \textit{isotopy by parts:}

\begin{defn}[isotopy by parts (IBP)]\label{isotopy_by_parts}
    An isotopy $\Gamma_t:[-1,1]^2\rightarrow\mathbb{R}^5,\,t\in[0,1]$ is called the isotopy by parts if it satisfies
    \begin{equation}\label{IsotopyByParts}
        \forall t\in[0,1/2]\,\, \mathrm{supp}(\Gamma_t)\subset\mathcal{A},\quad\forall t\in[1/2,1]\,\,\mathrm{supp}(\Gamma_t)\subset\mathcal{B}
    \end{equation}
    where $\mathcal{A}$ and $\mathcal{B}$ consist of finite number of connected disjoint open subsets of $\mathbb{R}^5$, and the diameter of each connected component of $\mathcal{A}\cup\mathcal{B}$ is less than $\frac{C}{2}\cdot d_{C^0}(\Gamma_0,\Gamma_1)$.
\end{defn}

\noindent If we find a Legendrian IBP between $\Lambda_k$ and $\Delta$, the Lemma \ref{IsotopyExtensionLemma} guarantees that we can extend this isotopy to get a contactomorphism $\Psi$ that satisfies $||\Psi||_{C^0}\leq C\cdot d_{C^0}(\Lambda_k,\Delta)$. The construction of the Legendrian IBP will be done indirectly in 3 steps.

\subsection*{Step I:}
\noindent\textit{We construct a \textbf{formal Legendrian IBP} between $\Lambda_k$ and $\Delta$ (see \ref{DefFormalLegIsotopy} for definition of formal Legendrian embeddings). Since we are working with Legendrian discs, it is easy to overcome topological obstructions of formal Legendrian isotopies, and we are essentially constructing a smooth IBP from $\Lambda_k$ to $\Delta$.}\\

\noindent Let $S_0,S_1\subset[-1,1]$ be two disjoint subsets such that
\begin{itemize}
    \item $[-1,1]\setminus S_0$ and $[-1,1]\setminus S_1$ are both disjoint unions of intervals of length $<10/m_k$,
    \item $[-1,1]\setminus(S_0\cup S_1)$ is disjoint union of intervals of length $>4/m_k$.
\end{itemize}
\noindent Fix $0<\varepsilon\ll1/m_k$ and define two subsets of $\mathbb{R}^5$, consisting of disjoint union of connected sets of small diameter
\begin{equation}\label{EquationMathcal(A,B)}
    \mathcal{A}=\mathcal{U}\times ([-1,1]\setminus S_0)\times (-\varepsilon,\varepsilon),\quad\mathcal{B}=\mathcal{U}\times ([-1,1]\setminus S_1)\times (-\varepsilon,\varepsilon).
\end{equation}

\begin{claim}
    There exists a \textbf{formal Legendrian isotopy} $\Gamma_{\tau}:[-1,1]^2\rightarrow\mathbb{R}^5,\tau\in[0,1]$, between $\Lambda_k$ and $\Delta$ which additionally satisfies
    \begin{equation}\label{splitingCondition}
        \mathrm{supp}_{\tau\in[0,1/2]}\Gamma_{\tau}\subset\mathcal{A},\quad\mathrm{supp}_{\tau\in[1/2,1]}\Gamma_{\tau}\subset\mathcal{B}
    \end{equation}
\end{claim}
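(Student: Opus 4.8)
The plan is to build the formal Legendrian isotopy from $\Lambda_k$ to $\Delta$ by first producing a \emph{smooth} isotopy by parts and then upgrading it to a formal Legendrian one using the $h$-principle for formal Legendrian embeddings of discs. Recall that $\Lambda_k(t,s)=(\gamma_{m_k}(t),s,0)$ and $\Delta(t,s)=(\chi(t),s,0)$, where $\chi$ has the same front projection as $\gamma_{m_k}$ except that near each point of $\mathrm{ZigZags}$ a small zig-zag has been inserted, with image inside the disjoint union $\mathcal{U}=\bigsqcup_{t\in\mathrm{ZigZags}}\mathcal{U}_t$. The essential smooth move is: in each chart $\mathcal{U}_t$, create a zig-zag. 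This is a smooth (even Legendrian, as a front-projection move) modification that is \emph{compactly supported in $\mathcal{U}_t$}, so the smooth isotopy can be taken supported in $\mathcal{U}\times[-1,1]\times(-\varepsilon,\varepsilon)$. First I would split $\mathrm{ZigZags}$ (equivalently, the $s$-interval $[-1,1]$... actually the $t$-parameter, via the neighborhoods $\mathcal{U}_t$) into two groups so that the corresponding pieces of the isotopy fit inside $\mathcal{A}$ and $\mathcal{B}$ respectively: perform the zig-zag creations over $T_0$-related charts during $\tau\in[0,1/2]$ with support in $\mathcal{A}$, and those over $T_1$-related charts during $\tau\in[1/2,1]$ with support in $\mathcal{B}$. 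The conditions on $S_0,S_1$ — that $[-1,1]\setminus S_0$ and $[-1,1]\setminus S_1$ are unions of short intervals while $[-1,1]\setminus(S_0\cup S_1)$ is a union of somewhat longer intervals — are exactly what is needed so that each $\mathcal{U}_t$ lies entirely inside one of the connected components of $([-1,1]\setminus S_0)$ or $([-1,1]\setminus S_1)$ in the $q$-factor, and so that every zig-zag chart is covered by at least one of the two families.

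Concretely, I would proceed as follows. Step one: choose the splitting of $\mathrm{ZigZags}$, i.e.\ partition the points $t\in\mathrm{ZigZags}$ into two sets $Z_0,Z_1$ so that $\bigsqcup_{t\in Z_0}\mathcal{U}_t$ projects into $[-1,1]\setminus S_0$ and $\bigsqcup_{t\in Z_1}\mathcal{U}_t$ projects into $[-1,1]\setminus S_1$ in the $q$-coordinate; this is possible since the $\mathcal{U}_t$ are small and disjoint and by the covering property of $S_0,S_1$. Step two: define the smooth isotopy $\Gamma_\tau^{\mathrm{sm}}$ as the concatenation of two sub-isotopies, the first ($\tau\in[0,1/2]$) creating all zig-zags indexed by $Z_0$ with support in $\mathcal{A}$, the second ($\tau\in[1/2,1]$) creating those indexed by $Z_1$ with support in $\mathcal{B}$; here one uses the standard local model for introducing a Legendrian zig-zag in a front, which is supported in an arbitrarily small neighborhood of the chosen point. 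Step three: invoke the $h$-principle (Gromov--Eliashberg; in the Legendrian-disc setting the relevant statement is in Murphy's work and is referenced in the excerpt) to $C^0$-approximate, relative to the endpoints and relative to the complement of $\mathcal{A}\cup\mathcal{B}$, the smooth isotopy by a \emph{formal} Legendrian isotopy with the same support constraint (\ref{splitingCondition}). Since a disc has no nontrivial topology, both $\Lambda_k$ and $\Delta$ carry formal Legendrian data, and the smooth isotopy between them, being through embeddings, automatically lifts to a path of formal Legendrian data once we fix compatible framings at the ends; the support control is inherited because outside $\mathcal{A}\cup\mathcal{B}$ nothing moves.

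The main obstacle I anticipate is the interplay between the support constraint $\mathrm{supp}_{\tau}\Gamma_\tau\subset\mathcal{A}$ (resp.\ $\mathcal{B}$) and the requirement that the isotopy be \emph{formal Legendrian} rather than merely smooth: I must make sure that the formal Legendrian structure can be deformed within the prescribed small neighborhoods, i.e.\ that the formal data at the ends ($\Lambda_k$ and $\Delta$) differ only inside $\mathcal{U}$ (in the appropriate parameter range), so that there is no global monodromy forcing the formal homotopy to spread outside $\mathcal{A}\cup\mathcal{B}$. Because $\chi$ agrees with $\gamma_{m_k}$ outside $\bigcup \mathcal{U}_t$ and the inserted zig-zags are genuine Legendrian front moves, the formal tangent-plane data of $\Delta$ equals that of $\Lambda_k$ outside $\mathcal{U}\times[-1,1]\times\{0\}$, so the formal homotopy genuinely localizes. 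The remaining work — checking that the two concatenated sub-isotopies glue smoothly at $\tau=1/2$ (they do, since at that moment the configuration is the partially-zig-zagged curve, fixed on an open set, and one can reparametrize so the isotopy is constant near $\tau=1/2$), and verifying the diameter bounds on the components of $\mathcal{A}\cup\mathcal{B}$ needed later for the IBP definition (these are built into the defining properties of $S_0,S_1$ and $\varepsilon\ll 1/m_k$) — is routine and I would only sketch it.
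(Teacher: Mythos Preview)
There is a genuine gap: you have misidentified the direction of the splitting. The sets $S_0,S_1$ are subsets of the $s$-interval (the $q$-coordinate), not of the $t$-interval; by definition
\[
\mathcal{A}=\mathcal{U}\times\big([-1,1]\setminus S_0\big)\times(-\varepsilon,\varepsilon),\qquad
\mathcal{B}=\mathcal{U}\times\big([-1,1]\setminus S_1\big)\times(-\varepsilon,\varepsilon),
\]
so both $\mathcal{A}$ and $\mathcal{B}$ contain the \emph{entire} $\mathcal{U}$ in the $\mathbb{R}^3$-factor, while they restrict the $q$-factor. Your plan partitions $\mathrm{ZigZags}\subset(-1,1)_t$ into $Z_0\sqcup Z_1$ and creates the $Z_0$-zig-zags first; but creating a zig-zag at $t_0$ moves $\Lambda_k(t,s)=(\gamma_{m_k}(t),s,0)$ for \emph{every} $s\in[-1,1]$, so the support of that sub-isotopy has $q$-projection equal to all of $[-1,1]$, which is never contained in $[-1,1]\setminus S_0$. (Your sentence ``$\bigsqcup_{t\in Z_0}\mathcal{U}_t$ projects into $[-1,1]\setminus S_0$ in the $q$-coordinate'' has no meaning, since $\mathcal{U}_t\subset\mathbb{R}^3(x,y,z)$ carries no $q$-coordinate.) A second, smaller error: inserting a zig-zag in a front is \emph{not} a Legendrian isotopy in $\mathbb{R}^3$ --- it changes the rotation number and necessarily passes through a singular (non-immersed) moment; this is precisely why the claim only asks for a \emph{formal} Legendrian isotopy and why the later steps of the paper need the loose-Legendrian machinery.

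The paper's proof does the opposite of your plan: it creates \emph{all} the zig-zags simultaneously in $t$, but staggers the creation in $s$. Concretely, one takes a homotopy $\widetilde\gamma_\lambda:[-1,1]\to\mathbb{R}^3$, $\lambda\in[-1,1]$, from $\gamma_{m_k}$ to $\chi$ (supported in $\mathcal{U}$, Legendrian away from the birth instants $\lambda=0$, $t\in\mathrm{ZigZags}$, with the standard swallowtail model there), and then sets
\[
\Gamma_\tau(t,s)=\big(\widetilde\gamma_{\lambda(s,\tau)}(t),\,s,\,h(t,\lambda(s,\tau))\big),
\]
where $\lambda(s,\tau)$ is chosen so that $\lambda\equiv-1$ on $S_0\times[0,1/2]$ and $\lambda\equiv 1$ on $S_1\times[1/2,1]$ (this is exactly what forces $\mathrm{supp}_{[0,1/2]}\Gamma_\tau\subset\mathcal A$ and $\mathrm{supp}_{[1/2,1]}\Gamma_\tau\subset\mathcal B$), and $h$ is a small bump in the $p$-direction, equal to $t-t_0$ near each $(t_0,0)$, whose role is to desingularize the birth moment so that $\Gamma_\tau$ is a genuine smooth embedding for every $\tau$. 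The upgrade to a formal Legendrian isotopy is then immediate: $\partial_z$ is transverse to $T\Gamma_\tau$, so one projects $T\Gamma_\tau$ to $\xi$ along the Reeb direction. No $h$-principle or relative $C^0$-approximation is invoked at this stage.
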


\begin{proof}
    Recall that $\Lambda_k(t,s)=(\gamma_{m_k}(t),s,0)$ and $\Delta(t,s)=(\chi(t),s,0)$, where $\gamma_{m_k}$ and $\chi$ are Legendrian curves that coincide outside the set $[-1,1]\setminus Op(\mathrm{ZigZags})$, and for each $t_0\in\mathrm{ZigZags}$ the restriction $\chi|_{Op(t_0)}$ has a zig-zag in the front projection, while $\gamma_{m_k}|_{Op(t_0)}$ has a smooth front. Denote by 
    \[\widetilde{\gamma}_{\tau}:[-1,1]\rightarrow\mathbb{R}^3\]
    \noindent a homotopy of topological embeddings between $\gamma_{m_k}$ and $\chi$ with support in $\mathcal{U}$ (see (\ref{EqMathcal{U}})) that is singular in the neighbourhood of points $\tau=0,t\in\mathrm{ZigZags}$ and Legendrian otherwise, with the local model near the singularities
        \[(t,\tau)\mapsto\big(t^3-\tau\cdot t,\frac{15}{4}(t^2-\tau/3),\frac{9}{4}t^5-\frac{5\tau}{2}t^3+\frac{5\tau^2}{4}t\big),\]
    \begin{figure}[h]
    \centering
    \includegraphics[scale=1]{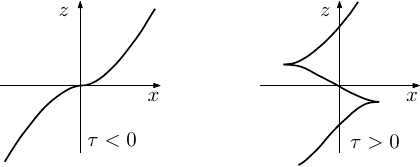}
    \caption{Front projection of $\widetilde{\gamma}_{\tau}$ ($\tau<0$ on the left, and $\tau>0$ on the right)}
    \label{ZigZagLocalModel}
    \end{figure}

    \noindent The next step is to extend $\widetilde{\gamma}_{\tau}$ to a smooth isotopy $\Gamma_{\tau}$ between $\Lambda_{m_k}$  and $\Delta$ (which can be further extended to a formal Legendrian isotopy). For this purpose we need some auxiliary functions. Let $\zeta:[-1,1]\rightarrow[1/4,3/4]$ be a smooth function which satisfies $\zeta|_{S_0}=1/4$ and $\zeta|_{S_1}=3/4$. Next, define a function $\lambda:[-1,1]\times[0,1]\rightarrow\mathbb{R}$ that satisfies $\lambda(s,0)=-1$, $\lambda(s,1)=1$, $\lambda(s,\zeta(s))=0$, $\lambda|_{S_0\times[0,1/2]}=-1$, $\lambda|_{S_1\times[1/2,1]}=1$ and $\frac{d}{d\tau}\lambda(s,\tau)>0$ (figure \ref{deltastau}).

\begin{figure}[h]
    \centering
    \includegraphics[width=0.9\textwidth]{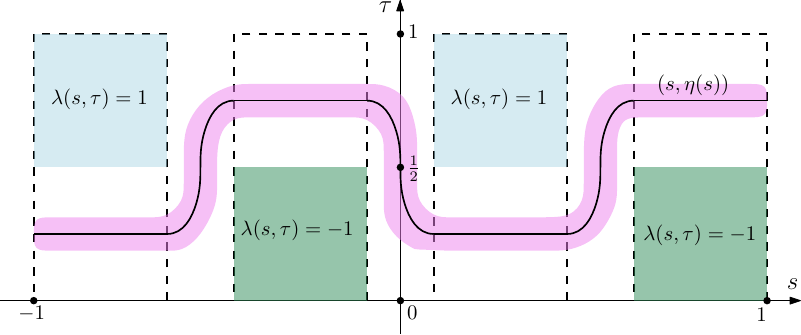}
    \caption{Properties of the function $\lambda(s,\tau)$. A neighbourhood of the zero set of $\lambda$ is coloured in violet.}
    \label{deltastau}
\end{figure}
    \noindent Additionally, define smooth function $h:\mathbb{R}^2\rightarrow [0,\varepsilon)$ supported inside $\mathcal{U}\times(-\varepsilon,\varepsilon)$ such that for each $t_0\in\mathrm{ZigZags}$ we have $h(x,y)=x-t_0$ for $(x,y)$ near $(t_0,0)$. One then easily verify that the smooth isotopy defined as
    \[\Gamma_{\tau}(t,s)=(\widetilde{\gamma}_{\lambda(s,\tau)}(t),s,h(t,\lambda(s,\tau)))\]
    \noindent satisfies all required properties, and can be extended to a formal Legendrian isotopy because the Reeb vector field $\partial/\partial z$ is transverse to $T\Gamma_{\tau}$, so we can project $T\Gamma_{\tau}$ to  $\xi$.
\end{proof}

\subsection*{Step II:}
\noindent We use \textit{relative $C^0$-dense approximation} (see Proposition \ref{WrinkledLegProp}) of the formal Legendrian isotopy $\Gamma_{\tau}$ from the previous step by a \textit{wrinkled Legendrian isotopy} $\Gamma'_{\tau}$ (see Definition \ref{DefWrinkledLeg}). The condition (\ref{splitingCondition}) and the fact that $\Gamma'_{\tau}$ is arbitrary $C^0$-close to $\Gamma_{\tau}$ implies that we can assume
\begin{equation}\label{MainRestriction}
    \mathrm{supp}_{\tau\in[0,1/2]}\Gamma'_{\tau}\subset\mathcal{A},\quad\mathrm{supp}_{\tau\in[1/2,1]}\Gamma'_{\tau}\subset\mathcal{B}
\end{equation}
\noindent Let $K\in\mathbb{Z}_+$ be the number of embryo singularities of the isotopy $\Gamma'_{\tau}$ where Legendrian wrinkles are created in the forward time. For each $1\leq i\leq K$ denote by $\tau^+_i$ the time when the $i$-th wrinkle was born, and by $\tau^-_i$ the time when the $i$-th wrinkle disappeared. Additionally, denote by $W^{\tau}_i$ (with $\tau\in(\tau^+_i,\tau^-_i)$) the $i$-th Legendrian wrinkle. The condition (\ref{MainRestriction}) implies
\begin{equation}\label{Restriction1}
    \begin{split}
        (\forall\tau\in(0,1/2])(\forall 1\leq i\leq K)\quad\tau_i^+<\tau<\tau_i^-\implies\Gamma'_{\tau}(W^{\tau}_i)\subset\mathcal{A},\\
        (\forall\tau\in[1/2,1))(\forall 1\leq i\leq K)\quad\tau_i^+<\tau<\tau_i^-\implies\Gamma'_{\tau}(W^{\tau}_i)\subset\mathcal{B}.
    \end{split}
\end{equation}

\subsection*{Step III:}

\noindent\textit{Lastly, we resolve Legendrian wrinkle singularities using \textit{twist markings} (see Definition \ref{DefTwistMarkings}) with the same techniques as Emmy Murphy used in \cite{Mu12} to prove that loose Legendrians satisfy $h$-principle. However, we choose twist markings more carefully, so that the resulting Legendrian isotopy satisfies condition (\ref{IsotopyByParts}).}\\

\noindent For each $I\in\mathcal{I}$ (see \ref{EqMathcal{I}Defn}) define
\[\widetilde{C}(I)=\{|x|<2/m,|y|<2/m,z\in I\}\subset\mathbb{R}^3,\]
\noindent and denote $\widetilde{\mathcal{C}}=\cup_{I\in\mathcal{I}}\widetilde{C}(I)$. Note that $C(z_I)\subset\widetilde{C}(I)$ (see \ref{EqMathcal{C}Defn}) and hence $\mathcal{C}\subset\widetilde{\mathcal{C}}$. Additionally, define
\begin{equation*}
    \widetilde{\mathcal{A}}=\widetilde{\mathcal{C}}\times([-1,1]\setminus S_0)\times(-3/m_k,3/m_k),\quad\widetilde{\mathcal{B}}=\widetilde{\mathcal{C}}\times([-1,1]\setminus S_1)\times(-3/m_k,3/m_k)
\end{equation*}
\noindent and note that $\mathcal{A}\subset\widetilde{\mathcal{A}},\mathcal{B}\subset\widetilde{\mathcal{B}}$ and the connected components of $\mathcal{A}$ are in "one to one" correspondence (as a subsets) with the connected components of $\widetilde{\mathcal{A}}$ (and same holds with the connected components of $\mathcal{B}$ and $\widetilde{\mathcal{B}}$).
\begin{claim}\label{ClaimPositionsOfLooseCharts}
    For each $1\leq i\leq K$ there exists an open subset 
    \[\mathcal{B}_i\subset(\widetilde{\mathcal{A}}\setminus\mathcal{A})\cap(\widetilde{\mathcal{B}}\setminus\mathcal{B})\]
    with the following properties
    \begin{enumerate}
        \item sets $\mathcal{B}_i$ are pairwise disjoint, and $\mathcal{B}_i\cap\mathrm{Im}\,\Lambda_{k}$ are loose charts,
        \item For $\tau\leq\frac{1}{2}$, $\Gamma'_{\tau}(W^{\tau}_i)$ and $\mathcal{B}_i$ are in the same connected component of the set $\widetilde{\mathcal{A}}$, and for $\tau\geq\frac{1}{2}$, $\Gamma'_{\tau}(W^{\tau}_i)$ and $\mathcal{B}_i$ are in the same connected component of the set $\widetilde{\mathcal{B}}$.
    \end{enumerate}
\end{claim}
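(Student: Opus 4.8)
The plan is to place the loose charts $\mathcal{B}_i$ one wrinkle at a time, working in the $z$-coordinate slices cut out by the collection $\mathcal{I}$. Recall that each Legendrian wrinkle $W^{\tau}_i$ is a small sphere-like singularity whose image under $\Gamma'_\tau$ is $C^0$-close to the image of the formal isotopy $\Gamma_\tau$; by construction the latter lives inside $\mathcal{U}\times([-1,1])\times(-\varepsilon,\varepsilon)$, and more precisely inside $\mathcal{A}$ for $\tau\le 1/2$ and inside $\mathcal{B}$ for $\tau\ge 1/2$ (this is (\ref{MainRestriction}) and (\ref{Restriction1})). So for each $i$ the wrinkle lives, throughout its lifetime $\tau\in(\tau_i^+,\tau_i^-)$, inside a single connected component of $\mathcal{A}$ when $\tau\le 1/2$ and inside a single connected component of $\mathcal{B}$ when $\tau\ge 1/2$ — and these two components are "matched" in the sense that they project to overlapping pieces of the $s$-interval, because the wrinkle cannot jump across a forbidden region between time slightly less than $1/2$ and time slightly more than $1/2$ (its $s$-support is contained in $([-1,1]\setminus S_0)\cap([-1,1]\setminus S_1)$, whose components have length $>4/m_k$, while the $s$-diameter of the wrinkle is $O(\varepsilon)\ll 1/m_k$). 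Consequently there is a well-defined connected component $\widetilde{A}_i$ of $\widetilde{\mathcal{A}}$ and $\widetilde{B}_i$ of $\widetilde{\mathcal{B}}$ containing that wrinkle for all $\tau$, with $\widetilde{A}_i$ and $\widetilde{B}_i$ sharing the same $\widetilde{C}(I)$ factor.

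The key geometric observation is that the enlarged sets $\widetilde{\mathcal{A}},\widetilde{\mathcal{B}}$ are strictly bigger than $\mathcal{A},\mathcal{B}$ in \emph{every} coordinate direction: the $xy$-window grows from $\{|x|,|y|<10/m_k\}$ to $\{|x|,|y|<2/m\}$ — here reading $m$ as $m_{k+1}\gg m_k$, so in fact this is a \emph{thinning} of the $xy$-window around the $z$-axis, which is exactly where the loose charts of $\mathrm{Im}\,\Lambda_k$ sit (the chart neighbourhoods $C(z_I)$ from subsection \ref{SectionLooseCharts} cluster along the $z$-axis at the $z_I$'s); and the $p$-window grows from $(-\varepsilon,\varepsilon)$ to $(-3/m_k,3/m_k)$. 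Thus the "annular" region $(\widetilde{\mathcal{A}}\setminus\mathcal{A})\cap(\widetilde{\mathcal{B}}\setminus\mathcal{B})$ is nonempty, open, and within each matched pair $(\widetilde{A}_i,\widetilde{B}_i)$ it contains a genuine open set; in particular it contains, for each $I\in\mathcal{I}$, a neighbourhood of the loose chart of $\Lambda_k$ sitting over $C(z_I)$ but pushed slightly out in the $p$-direction so as to avoid the thin slab $|p|<\varepsilon$. I would take $\mathcal{B}_i$ to be a small open neighbourhood of such a displaced loose chart, one for each wrinkle, chosen pairwise disjoint — there is room for this because the number $K$ of wrinkles is finite and fixed once $\Gamma'_\tau$ is fixed, while each $\widetilde{C}(I)$ contains infinitely much room along the $z$-axis (and we may always shrink). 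That $\mathcal{B}_i\cap\mathrm{Im}\,\Lambda_k$ is a loose chart follows because it is a small perturbation of one of the charts $C(z_I)$, which are loose by the construction in subsection \ref{SectionLooseCharts}, and looseness is an open condition in this sense.

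I would organize the argument as: (1) fix $i$; identify the interval $I(i)\in\mathcal{I}$ and the $s$-component so that $\Gamma'_\tau(W^\tau_i)\subset \widetilde{C}(I(i))\times J_i\times(-\varepsilon,\varepsilon)$ for $J_i$ a fixed component of $([-1,1]\setminus S_0)\cap([-1,1]\setminus S_1)$, valid for all $\tau$; (2) inside $\widetilde{C}(I(i))$, pick a $z$-level $\zeta_i\in I(i)$ not used by any $\mathcal{B}_{i'}$ with $i'<i$ and distinct from all $z_I$, and take a small loose chart of $\Lambda_k$ near $(0,0,\zeta_i)$ — this exists because the front of $\gamma_{m_k}$ has $\ge 4$ cusps on $I(i)$, hence a zig-zag available to host a loose chart; (3) set $\mathcal{B}_i$ to be this chart times $J_i$ times a small $p$-interval contained in $(\varepsilon, 3/m_k)$ (say), which lands it in $(\widetilde{\mathcal{A}}\setminus\mathcal{A})\cap(\widetilde{\mathcal{B}}\setminus\mathcal{B})$; (4) verify property (2) of the claim — $\mathcal{B}_i$ and $\Gamma'_\tau(W^\tau_i)$ share the $\widetilde{C}(I(i))\times J_i$ prism, hence lie in the same component of $\widetilde{\mathcal{A}}$ for $\tau\le 1/2$ and of $\widetilde{\mathcal{B}}$ for $\tau\ge 1/2$; (5) disjointness is arranged in step (2) by spacing the $\zeta_i$'s.

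The main obstacle is bookkeeping the \emph{matching} across $\tau=1/2$: one must be certain that the single wrinkle $W_i$ genuinely stays in one $s$-component of $([-1,1]\setminus S_0)\cap([-1,1]\setminus S_1)$ for its whole life, so that its $\widetilde{\mathcal{A}}$-component and its $\widetilde{\mathcal{B}}$-component are forced to be the compatible pair $\widetilde{A}_i,\widetilde{B}_i$ over the same $I$. This is where the quantitative choice of $S_0,S_1$ (components of $[-1,1]\setminus(S_0\cup S_1)$ of length $>4/m_k$) against the $O(\varepsilon)$-smallness of the wrinkle does the work: a wrinkle that is born and dies inside a region where $\Gamma_\tau$ — hence $\Gamma'_\tau$ — is supported in $\mathcal{A}$ for $\tau\le 1/2$ and in $\mathcal{B}$ for $\tau\ge 1/2$ simply cannot have $s$-support crossing an excluded interval, so its $s$-projection is trapped in one $J_i$. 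Everything else is either an openness statement about looseness or an elementary "there is enough room along the $z$-axis" count, and I would keep those brief.
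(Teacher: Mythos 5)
There is a genuine gap, and it is fatal to property 1 of the claim: your set $\mathcal{B}_i$ is a product of a chart in $\mathbb{R}^3(x,y,z)$ with $J_i$ and with a $p$-interval contained in $(\varepsilon,3/m_k)$, i.e.\ a $p$-interval that misses $p=0$. But $\mathrm{Im}\,\Lambda_k=\{(\gamma_{m_k}(t),s,0)\}$ lies entirely in the slice $\{p=0\}$, so your $\mathcal{B}_i$ satisfies $\mathcal{B}_i\cap\mathrm{Im}\,\Lambda_k=\emptyset$ and in particular is not a loose chart of $\Lambda_k$. The error comes from a misreading of the sets involved: $\mathcal{A}$ and $\mathcal{B}$ in (\ref{EquationMathcal(A,B)}) have $xyz$-factor $\mathcal{U}$, which is a union of \emph{small neighbourhoods of the finitely many points} $\gamma_{m_k}(\mathrm{ZigZags})$, not the full box $\{|x|,|y|<10/m_k\}$ (that box is $\pi_{xyz}(\mathcal{V}_k)$); likewise the $m$ in the definition of $\widetilde{C}(I)$ must be read as $m_k$ (otherwise $C(z_I)\subset\widetilde{C}(I)$ would fail), so $\widetilde{\mathcal{A}},\widetilde{\mathcal{B}}$ are enlargements, not thinnings. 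Because the points $z_I$ were chosen so that $\gamma_{m_k}(\mathrm{ZigZags}(I))\cap C(z_I)=\emptyset$, the correct charts --- and this is what the paper does --- are $\mathcal{B}_i=C(z_I)\times J\times(-3/m_k,3/m_k)$ with $J$ a component of $[-1,1]\setminus(S_0\cup S_1)$: they \emph{do} contain the zero section $\{p=0\}$, they meet $\Lambda_k$ in a zig-zag times an interval (a loose chart by the explicit rescaling of Claim \ref{LooseChartsClaim}, with $\rho>1$ because $|J|>4/m_k$), and they avoid $\mathcal{A}\cup\mathcal{B}$ through the $xyz$-factor rather than through the $p$-factor. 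Pushing off in $p$, and then appealing to ``looseness is an open condition,'' cannot replace this: the loose chart condition is that the \emph{pair} $(\mathcal{B}_i,\mathcal{B}_i\cap\mathrm{Im}\,\Lambda_k)$ is contactomorphic to the model, which requires a nonempty intersection of the prescribed product form and the quantitative ratio $\rho>1$.

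The part of your argument addressing property 2 --- the wrinkle stays in one connected component of $\mathcal{A}$ for $\tau\le 1/2$ and one of $\mathcal{B}$ for $\tau\ge 1/2$, these match at $\tau=1/2$ across a common component $J$ of $[-1,1]\setminus(S_0\cup S_1)$, and the chart placed in the corresponding prism $\widetilde{C}(I)\times J\times(-3/m_k,3/m_k)$ shares those components --- is essentially the paper's reasoning and is fine. Your treatment of disjointness, however, is also under-justified: you propose to space the charts at distinct $z$-levels $\zeta_i$ inside $I$, but the number of wrinkles assigned to a single prism is not controlled, whereas the number of $z$-axis crossings of the front inside $I$ hosting a suitable zig-zag away from $\mathcal{U}$ is bounded (each $I$ contains only $4$ or $6$ cusps). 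The paper instead allows several wrinkles to receive the \emph{same} chart $C(z_I)\times J\times(-3/m_k,3/m_k)$ and then produces pairwise disjoint loose charts inside it via Proposition \ref{LooseChartsProp} (a loose chart contains infinitely many disjoint loose charts); your construction should be repaired the same way.
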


\begin{proof}
    Based on property (\ref{Restriction1}) and the fact that $\tau\mapsto W^{\tau}_i$ is continuous, we conclude that every Legendrian wrinkle can visit at most one connected component of the set $\mathcal{A}$ when $\tau\leq 1/2$, and at most one connected component of the set $\mathcal{B}$ when $\tau\geq 1/2$. Therefore, to each Legendrian wrinkle we can assign a loose chart of the form
    \[C(z_I)\times J\times (-3/m_k,3/m_k)\,\,(\text{$J$ is a connected component of $[-1,1]\setminus(S_0\cup S_1)$, }\,I\in\mathcal{I}),\]
    \noindent so that condition 2. is satisfied (see (\ref{EqMathcal{I}Defn}) for definition of $\mathcal{I}$ and $z_I$, and Claim \ref{LooseChartsClaim} for description of loose chart). However, it might happen that we assign the same loose chart to a different Legendrian wrinkles, and this is resolved using the Proposition \ref{LooseChartsProp}.
\end{proof}

\noindent Define a new family $\Gamma''_{\tau}$ of wrinkled Legendrians that is equal to $\Gamma'_{\tau}$ outside $\sqcup_{i=1}^K \mathcal{B}_i$, and on each $\Lambda_{k}^{-1}(\mathcal{B}_i)=(\Gamma'_{\tau})^{-1}(\mathcal{B}_i)$ we replace the loose chart with an inside-out wrinkle (see (\ref{EqInside-outWrinkle})) with the same boundary conditions, constant in $\tau$.\\

\noindent For each $i\in\{1,2,\ldots,K\}$ and $\tau\in[0,1]$, we find a marking $\Phi^i_{\tau}\subset [-1,1]^2$ with the following properties
\begin{itemize}
    \item near the ends of the interval $[0,1]$, $\Phi^i_{\tau}$ is an interval contained in $(\Gamma''_{\tau})^{-1}(\mathcal{B}_i)$, exactly as defined for the model inside-out wrinkle,
    \item $\Phi^i_{\tau}$ is either diffeomorphic to an interval $[0,1]$ or a union of 2 intervals $S^0\times[0,1]$,
    \item $\Phi^i_{\tau^+_i}$ and $\Phi^i_{\tau^-_i}$ contain $i$-th embryo singularities,
    \item other than the component in $(\Gamma''_{\tau})^{-1}(\mathcal{B}_i)$, the boundary of $\Phi^i_{\tau}$ is exactly the 0-sphere of Legendrian wrinkles $W^{\tau}_i$,
    \item $(\forall\tau\in[0,1/2])\,\Gamma''_{\tau}(\Phi^i_{\tau})\subset\widetilde{\mathcal{A}}$ and $(\forall\tau\in[1/2,1])\,\Gamma''_{\tau}(\Phi^i_{\tau})\subset\widetilde{\mathcal{B}}$.
\end{itemize}

\noindent The markings which satisfy first four out of five listed properties can always be found (those are exactly the same properties as in the proof of Theorem 1.3. in \cite{Mu12}), and the last property follow from the property 2. in Claim \ref{ClaimPositionsOfLooseCharts}. We now apply Lemma \ref{LemmaResolvingSingularities} for each $\Phi^i_{\tau}$, one at a time. The resulting family
\[\widetilde{\Gamma''}_{\tau}:[-1,1]^2\rightarrow(\mathbb{R}^5,dz-ydx-pdq),\,\tau\in[0,1]\]
\noindent is a family of smooth Legendrians which furthermore satisefies:
\begin{enumerate}
    \item $(\forall\tau\in[0,1/2])\,\widetilde{\Gamma''}_{\tau}(\Phi^i_{\tau})\subset\widetilde{\mathcal{A}}$ and $(\forall\tau\in[1/2,1])\,\widetilde{\Gamma''}_{\tau}(\Phi^i_{\tau})\subset\widetilde{\mathcal{B}}$,
    \item $(\forall\tau\in\{0,1\})\,\widetilde{\Gamma''}_{\tau}$ is isotopic to $\Gamma_\tau$, via an isotopy supported in $\sqcup_{i=1}^K\mathcal{B}_i$.
\end{enumerate}

\noindent The above properties imply we can construct a Legendrian isotopy $\Gamma'''_{\tau}:[0,1]\rightarrow\mathbb{R}^5$ such that $\Gamma'''_0=\Gamma_0=\Lambda_{k}$, $\Gamma'''_1(t,s)=\Gamma_1=\Delta$ and 
\[(\forall\tau\in[0,1/2])\,\Gamma'''_{\tau}(\Phi^i_{\tau})\subset\widetilde{\mathcal{A}},\quad(\forall\tau\in[1/2,1])\,\Gamma'''_{\tau}(\Phi^i_{\tau})\subset\widetilde{\mathcal{B}}.\]
\noindent Finally, we get a contactomorphism $\Psi$ using the Lemma \ref{IsotopyExtensionLemma} that satisfies $d_{C^0}(\Psi,\mathrm{Id})<100/m_k$.

\section{Technical Lemmas}

\noindent The majority of contact diffeomorphisms we will construct by extending Legendrian isotopies. Additionally, we want to control the $C^0$-norm of the extension in terms of \textit{$C^0$-size} (defined below) of the isotopy.

\begin{defn}[support on the time interval]
    Let $\Gamma_{\tau}:X\rightarrow Y,\,\tau\in I$ be a smooth isotopy. The support of $\Gamma_{\tau}$ at time $\tau_0\in I$ is defined as
    \[\mathrm{supp}\,\Gamma_{\tau_0}:=\mathrm{Closure}\{\Gamma_{\tau_0}(x)\mid\frac{d}{d\tau}|_{\tau=\tau_0}\Gamma_{\tau}(x)\neq 0\}\subset Y.\] The support of the isotopy $\Gamma_{\tau}$ on the interval $J\subset I$ is defined to be
    \[\mathrm{supp}_{\tau\in J}\Gamma_{\tau}:=\bigcup_{\tau\in J}\mathrm{supp}\,\Gamma_{\tau}.\]

\end{defn}

\begin{defn}[size of the isotopy]
    Let $\Gamma_{\tau}:X\rightarrow Y,\,\tau\in I$ be an isotopy, and let $d$ be a Riemannian distance on $Y$. We define the size of the isotopy $\Gamma_{\tau}$ on the interval $J\subset I$ as the supremum of the diameters of the connected components of $\mathrm{supp}_{\tau\in J}\Gamma_{\tau}$
    \[\mathrm{size}_{\tau\in J}\,\Gamma_{\tau}:=\sup\{\mathrm{diam}(S)\mid S\subset\mathrm{supp}_{\tau\in J}\Gamma_{\tau}\text{ and }S\text{ is connected}\}.\]
\end{defn}

\noindent The next lemma is the main $C^0$-estimate for a contact diffeomorphism that we get by extending a Legendrian isotopy.

\begin{lemma}\label{IsotopyExtensionLemma}
    Let $\Gamma_{\tau}:\Lambda\rightarrow (Y,\xi),\tau\in[0,1]$ be an isotopy of Legendrian embeddings. For every $\varepsilon>0$ there exists a contact isotopy $\phi^{\tau}:(Y,\xi)\rightarrow(Y,\xi)$ such that $\phi^{\tau}\circ\Gamma_0=\Gamma_{\tau}$ and
    \[\max_{\tau\in[0,1]}\,d_{C^0}(\phi^{\tau},\mathrm{Id})<\mathrm{size}_{\tau\in[0,1/2]}\Gamma_{\tau}+\mathrm{size}_{\tau\in[1/2,1]}\Gamma_{\tau}+\varepsilon.\] 
\end{lemma}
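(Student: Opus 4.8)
\textbf{Proof plan for Lemma \ref{IsotopyExtensionLemma}.}

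The plan is to decompose the isotopy $\Gamma_\tau$ into its two halves and extend each half separately, then concatenate. Concretely, on the time interval $[0,1/2]$ the support $\mathrm{supp}_{\tau\in[0,1/2]}\Gamma_\tau$ is a closed subset of $Y$ whose connected components all have diameter at most $s_0:=\mathrm{size}_{\tau\in[0,1/2]}\Gamma_\tau$; similarly for $[1/2,1]$ with $s_1:=\mathrm{size}_{\tau\in[1/2,1]}\Gamma_\tau$. The key point is that one can extend a Legendrian isotopy to an \emph{ambient contact isotopy whose trajectories stay inside (an arbitrarily small neighbourhood of) the support of the Legendrian isotopy}. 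This is the standard Legendrian isotopy extension theorem, but run with care: the contact Hamiltonian generating the extension can be chosen supported in an arbitrarily small neighbourhood of $\mathrm{supp}_{\tau}\Gamma_\tau$, so every point of $Y$ either stays fixed or moves within one connected component of that neighbourhood. Hence the $C^0$-displacement of the extension of the first half is $<s_0+\varepsilon/2$, and of the second half is $<s_1+\varepsilon/2$.

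First I would recall/invoke the parametric Legendrian isotopy extension theorem: given $\Gamma_\tau:\Lambda\to(Y,\xi)$, there is a time-dependent contact Hamiltonian $H_\tau$ whose flow $\phi^\tau$ satisfies $\phi^\tau\circ\Gamma_0=\Gamma_\tau$, and $H_\tau$ is supported in any prescribed neighbourhood of $\bigcup_\tau\Gamma_\tau(\Lambda)$. Applying this on $[0,1/2]$ with a neighbourhood $\mathcal{O}_0$ of $\mathrm{supp}_{\tau\in[0,1/2]}\Gamma_\tau$ small enough that each connected component of $\mathcal{O}_0$ has diameter $<s_0+\varepsilon/2$ (possible since the support is compact with components of diameter $\le s_0$, and by a partition-of-unity cut-off of the Hamiltonian we may shrink the neighbourhood of the support while keeping the flow identity off it), we get a contact isotopy $\phi^\tau$, $\tau\in[0,1/2]$, with $\phi^0=\mathrm{Id}$, $\phi^\tau\circ\Gamma_0=\Gamma_\tau$, and $d_{C^0}(\phi^\tau,\mathrm{Id})<s_0+\varepsilon/2$ for all $\tau\le 1/2$: indeed each point is either fixed by the flow for all time (its trajectory never meets $\mathcal{O}_0$) or its entire trajectory lies in one component of $\mathcal{O}_0$, since the Hamiltonian vanishes outside $\mathcal{O}_0$ and components are flow-invariant up to escape, which cannot happen.

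Next I would run the same construction on $[1/2,1]$, but for the isotopy $\tau\mapsto(\phi^{1/2})^{-1}\circ\Gamma_{\tau+1/2}$ (or, more cleanly, extend the Legendrian isotopy $\Gamma_{\tau}$ for $\tau\in[1/2,1]$ starting from the already-achieved Legendrian $\phi^{1/2}\circ\Gamma_0=\Gamma_{1/2}$), obtaining a contact isotopy $\psi^\tau$, $\tau\in[1/2,1]$, with $\psi^{1/2}=\mathrm{Id}$, $\psi^\tau\circ\Gamma_{1/2}=\Gamma_\tau$, and $d_{C^0}(\psi^\tau,\mathrm{Id})<s_1+\varepsilon/2$; here the supporting neighbourhood is taken around $\mathrm{supp}_{\tau\in[1/2,1]}\Gamma_\tau$. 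Concatenating, I set $\phi^\tau$ as given for $\tau\le 1/2$ and $\phi^\tau:=\psi^\tau\circ\phi^{1/2}$ for $\tau\ge1/2$; this is a contact isotopy with $\phi^\tau\circ\Gamma_0=\Gamma_\tau$. For the $C^0$ bound, $d_{C^0}(\phi^\tau,\mathrm{Id})\le d_{C^0}(\psi^\tau,\mathrm{Id})+d_{C^0}(\phi^{1/2},\mathrm{Id})<s_1+\varepsilon/2+s_0+\varepsilon/2=s_0+s_1+\varepsilon$ for $\tau\ge1/2$, and the bound $<s_0+\varepsilon/2<s_0+s_1+\varepsilon$ is already known for $\tau\le1/2$. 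Taking the max over $\tau\in[0,1]$ gives the claim.

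The main obstacle is the geometric sharpening of the isotopy extension theorem: producing the extending contact isotopy so that \emph{no point of $Y$ is displaced by more than the size of the support (plus $\varepsilon$)}, rather than merely being supported in a neighbourhood of the Legendrian. This requires choosing the extending contact Hamiltonian to be genuinely supported in a neighbourhood of $\mathrm{supp}_\tau\Gamma_\tau$ (not of all of $\Gamma_\tau(\Lambda)$), using the fact that where $\Gamma_\tau$ is momentarily stationary the Hamiltonian can be taken to vanish, together with a careful cut-off. One must check that shrinking the Hamiltonian's support to lie within $\varepsilon$ of $\mathrm{supp}_\tau\Gamma_\tau$ still yields a flow realising the Legendrian isotopy — which holds because the Hamiltonian is only modified away from the Legendrian itself — and that the components of the resulting support-neighbourhood, being small perturbations of the compact set $\mathrm{supp}_\tau\Gamma_\tau$ whose components have diameter $\le \mathrm{size}$, have diameter $<\mathrm{size}+\varepsilon/2$. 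Once this refined extension statement is in hand, the two-step concatenation and the triangle inequality finish the proof routinely.
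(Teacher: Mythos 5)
Your proposal is correct and follows essentially the same route as the paper: extend the Legendrian isotopy via a contact Hamiltonian cut off so that it vanishes outside a small neighbourhood of $\mathrm{supp}_{\tau}\Gamma_{\tau}$ (with components of diameter within $\varepsilon/2$ of the half-sizes), note that nonconstant flow trajectories are trapped in a single component, and combine the two halves by the triangle inequality. The only cosmetic difference is that you concatenate two separately extended flows, whereas the paper runs a single Hamiltonian flow on $[0,1]$ and performs the corresponding case analysis, which amounts to the same estimate.
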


\begin{proof}
    Let $\mathrm{supp}_{\tau\in[0,1/2]}\Gamma_{\tau}=\bigsqcup_{i=1}^k A_i\subset Y$ and $\mathrm{supp}_{\tau\in[1/2,1]}\Gamma_{\tau}=\bigsqcup_{j=1}^l B_j\subset Y$, where $A_1,\ldots,A_k,B_1,\ldots,B_l$ are connected closed subsets of $Y$. For each $i\in\{1,\ldots,k\}$ let $\mathcal{A}_i$ be an open subset of $Y$, and for each $j\in\{1,\ldots,l\}$ let $\mathcal{B}_j$ be an open subset of $Y$ such that the following holds
    \begin{itemize}
        \item $A_i\subset\mathcal{A}_i$, and $B_j\subset\mathcal{B}_j$,
        \item $\mathrm{diam}(\mathcal{A}_i)<\mathrm{diam}(A_i)+\varepsilon/2$ and $\mathrm{diam}(\mathcal{B}_j)<\mathrm{diam}(B_j)+\varepsilon/2$,
        \item $\mathcal{A}_1,\ldots,\mathcal{A}_k$ are pairwise disjoint, and $\mathcal{B}_1,\ldots,\mathcal{B}_l$ are pairwise disjoint.
    \end{itemize}
    \noindent Denote $\mathcal{A}=\bigcup_{i=1}^k\mathcal{A}_i$ and $\mathcal{B}=\bigcup_{j=1}^l\mathcal{B}_j$. The Legendrian isotopy $\Gamma_{\tau}$ can be extended to an ambient contact isotopy $\phi^{\tau}_{H}$, where $H_{\tau}:Y\rightarrow Y$ is a time-dependent contact Hamiltonian which is equal to $0$ outside an arbitrary small tubular neighbourhood of $\mathrm{supp}\,\Gamma_{\tau}$ (see Theorem 2.41. in \cite{Ge08}). Thus, we can ensure that $H_{\tau}$ satisfies
    \[H_{\tau}(x)=0 \text{ for }(x,\tau)\in (Y\setminus\mathcal{A})\times[0,1/2]\cup(Y\setminus\mathcal{B})\times[1/2,1].\]
    \noindent It only remains to verify that $\phi^{\tau}:=\phi^{\tau}_{H}$ satisfies the required $C^0$ bound, so we consider the following cases:
    \begin{itemize}
        \item $x\in Y\setminus(\mathcal{A}\cup\mathcal{B})$, then $d(\phi^{\tau}(x),x)=0$ for all $\tau\in[0,1]$,
        \item $x\in\mathcal{B}\setminus\mathcal{A}$, then there exists $j\in\{1,\ldots,l\}$ such that $\phi^{\tau}(x)\in\mathcal{B}_j$ for all $\tau\in[0,1]$. Therefore we have $d(\phi^{\tau}(x),x)\leq\mathrm{diam}(\mathcal{B}_j)<\mathrm{diam}(B_j)+\varepsilon/2\leq\mathrm{size}_{\tau\in[1/2,1]}\Gamma_{\tau}+\varepsilon/2$, 
        \item $x\in\mathcal{A}$, then there exists $i\in\{1,\ldots,k\}$ such that $\phi^{\tau}(x)\in\mathcal{A}_i$ for all $\tau\in[0,1/2]$, thus we have 
        \[(\forall\tau\in[0,1/2])\,d(\phi^{\tau}(x),x)\leq\mathrm{diam}(\mathcal{B}_j)<\mathrm{diam}(B_j)+\varepsilon/2\leq\mathrm{size}_{\tau\in[1/2,1]}\Gamma_{\tau}+\varepsilon/2\]
        For $\tau\in[1/2,1]$, we further distinguish two cases:
        \begin{itemize}
            \item $\phi^{1/2}(x)\in\mathcal{B}$, then there exists $j\in\{1,\ldots,l\}$ such that $\phi^{\tau}(x)\in\mathcal{B}_j$ for all $\tau\in[1/2,1]$. Therefore we have
            \[d(\phi^{\tau}(x),\phi^{1/2}(x))\leq\mathrm{diam}(\mathcal{B}_j)<\mathrm{diam}(B_j)+\varepsilon/2\leq\mathrm{size}_{\tau\in[1/2,1]}\Gamma_{\tau}+\varepsilon/2.\]
            Finally, the triangle inequality gives
            \[d(\phi^{\tau}(x),x)\leq d(\phi^{\tau}(x),\phi^{1/2}(x))+d(\phi^{1/2}(x),x)<\mathrm{size}_{\tau\in[0,1/2]}\Gamma_{\tau}+\mathrm{size}_{\tau\in[1/2,1]}\Gamma_{\tau}+\varepsilon.\]
            \item $\phi^{1/2}(x)\not\in\mathcal{B}$, then $\phi^{\tau}(x)=\phi^{1/2}(x)$ for all $\tau\in[1/2,1]$, therefore we have $d(\phi^{\tau}(x),x)\leq\mathrm{diam}(\mathcal{A}_i)<\mathrm{diam}(A_i)+\varepsilon/2\leq\mathrm{size}_{\tau\in[1/2,1]}\Gamma_{\tau}+\varepsilon/2$.
        \end{itemize}
    \end{itemize}
\end{proof}

\begin{lemma}\label{lambda_function_lemma}
For any $\epsilon > 0$, there exists a smooth map $\lambda_\tau : [-1,1]^2 \to [0,1]$ for $\tau \in [0,1]$ satisfying:
\begin{enumerate}[label=(\roman*)]
    \item $\lambda_0 \equiv 0$, $\lambda_1 \equiv 1$ and $\frac{\partial \lambda_\tau}{\partial \tau} \geq 0$ (non-decreasing),
    \item For each $i \in \{0,1,2,3\}$ and $\tau \in [i/4,(i+1)/4]$, the support $\mathrm{supp}\,\lambda_{\tau}\subset\mathcal{A}_i$ where $\mathcal{A}_i\subset[-1,1]^2$ is a disjoint union of open sets of diameter $<\epsilon$.
\end{enumerate}
\end{lemma}

\begin{proof}
    Choose subsets $A,B \subset [-1,1]$ such that $A \cap B = \emptyset$, and all sets $A,B,[-1,1]\setminus A$ and $[-1,1]\setminus B$ are unions of intervals of length $< \epsilon/\sqrt{2}$. Define $\lambda_\tau$ for $\tau\in [0,1]$ via:
    \begin{itemize}
        \item $(\forall\tau\in[0,1/4])$ $\frac{d}{d\tau}\lambda_{\tau}|_{A\times[-1,1]\cup [-1,1]\times A}=0$ and $\lambda_{1/4}|_{B\times B}=1$,
        \item $(\forall\tau\in[1/4,1/2])$ $\frac{d}{d\tau}\lambda_{\tau}|_{A\times[-1,1]\cup [-1,1]\times B}=0$ and $\lambda_{1/2}|_{B\times[-1,1]}=1$,
        \item $(\forall\tau\in[1/2,3/4])$ $\frac{d}{d\tau}\lambda_{\tau}|_{B\times[-1,1]\cup [-1,1]\times A}=0$ and $\lambda_{3/4}|_{B\times[-1,1]\cup [-1,1]\times B}=1$,
        \item $(\forall\tau\in[3/4,1])$ $\frac{d}{d\tau}\lambda_{\tau}|_{B\times[-1,1]\cup [-1,1]\times B}=0$ and $\lambda_1\equiv 1$.
    \end{itemize}
    \noindent The support of $\lambda_{\tau}$ for $\tau\in[i/4,(i+1)/4]$ lies inside the union of squares of sides $\epsilon/\sqrt{2}$.
\end{proof}

\subsection{Loose charts on $\Lambda_k$}\label{SectionLooseCharts}

\noindent In the front projection $\mathbb{R}^2(x,z)$, the curve $\gamma_{m_k}$ (see (\ref{Definition_gamma_m(t)}) for the explicit description) has cusps for
\begin{equation}\label{CuspsSet}
    t\in\mathrm{Cusps}(\gamma_{m_k}):=\Big\{\frac{(4i-1)\pi}{4m_k^2}\,\big\vert\, i\in\mathbb{Z}\Big\}\cap(-1,1)
\end{equation}
\noindent Coordinates of the cusps of $\gamma_{m_k}$ in the front projection have form
\[Q_i=\Big(\frac{-\sqrt{2}\cos(i\pi)}{m},\frac{(4i-1)\pi}{4m_k^2}\Big)\subset\mathbb{R}^2(x,z).\]
\noindent The front projection of the curve $\gamma_{m_k}$ intersects the $z$-axis at points
\begin{equation}\label{EqZ-axisIntersection}
    z_i=\pi_{z}\circ\gamma_m\Big(\frac{(4i+1)\pi}{4m_k^2}\Big)=\frac{(4i+1)\pi}{4m_k^2}
\end{equation}

\begin{figure}[h]
    \centering
    \includegraphics[scale=0.7]{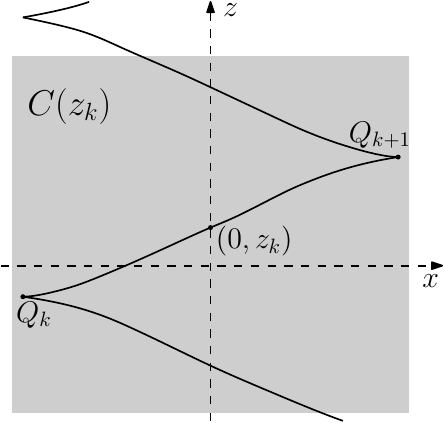}
    \caption{Front projection of $\gamma_{m_k}$ (gray area represents a projection of a loose chart)}
    \label{ZigZagFigure}
\end{figure}

\noindent For each $z_i$ defined as above, $\rho>1$ and $q_0\in(-1,1)$ define
\begin{equation*}
    \begin{split}
        C(z_i) & =\big\{|x|<\frac{3}{2m_k}, |y|<\frac{3}{2m_k}, |z-z_i|< \frac{9}{2m_k^2}\big\}\subset\mathbb{R}^3(x,y,z), \\
        V_{\rho}(q_0) & =\big\{|q-q_0|<\frac{3\rho}{2m_k}, |p|< \frac{3\rho}{2m_k}\big\}\subset\mathbb{R}^2(q,p), \\
        Z_{\rho}(q_0 & )=\big\{(q,0)\in\mathbb{R}^2\mid |q-q_0|< \frac{3\rho}{2m_k}\big\}\subset\mathbb{R}^2(q,p).
    \end{split}
\end{equation*}

\begin{claim}\label{LooseChartsClaim}
    The pair $(C(z_i)\times V_{\rho}(q_0),(C(z_i)\times V_{\rho}(q_0))\cap\, \mathrm{Im}\,\Lambda_k)$ is a loose chart.
\end{claim}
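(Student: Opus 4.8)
The plan is to reduce the claim that $(C(z_i)\times V_\rho(q_0),\,(C(z_i)\times V_\rho(q_0))\cap\mathrm{Im}\,\Lambda_k)$ is a loose chart to Murphy's standard model of a loose chart. Recall that a loose chart is, by definition, a Legendrian that looks like the product of a \emph{stabilized arc} (a Legendrian arc in $\mathbb{R}^3$ whose front is a single zig-zag, i.e. a standard Legendrian with one birth–death-type cusp pair) with the zero section of $T^*\mathbb{R}^{n-1}$ inside a Darboux ball, where the ball is ``large enough'' relative to the size of the zig-zag. So the first step is to unwind the definitions: in our situation $n=2$, the Legendrian is $\Lambda_k(t,s)=(\gamma_{m_k}(t),s,0)$, and its intersection with $C(z_i)\times V_\rho(q_0)$ is precisely the product of $\gamma_{m_k}$ restricted to the portion of $t$-parameter mapping into $C(z_i)$ with the arc $\{(q,0):|q-q_0|<\tfrac{3\rho}{2m_k}\}$ of the zero section $Z_\rho(q_0)$.

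Next I would verify that $\gamma_{m_k}$ restricted to the relevant $t$-interval has exactly one zig-zag in its front projection, which is exactly the content recorded earlier: the front of $\gamma_{m_k}$ intersects the $z$-axis at the points $z_i$ with a single zig-zag (one up-cusp and one down-cusp) sitting between consecutive intersections, and $C(z_i)$ was chosen (with constants $3/(2m_k)$, $3/(2m_k)$, $9/(2m_k^2)$) so that the part of the front inside $C(z_i)$ is precisely one such zig-zag together with the two smooth strands entering and leaving it. Then, after a contactomorphism of $\mathbb{R}^3$ rescaling by $m_k$ in $x,y$ and $m_k^2$ in $z$ (which is a contact transformation of $(\mathbb{R}^3,dz-ydx)$ up to conformal factor, or one can use an explicit Darboux chart), this local piece becomes the standard model zig-zag arc $\mathfrak{z}$ of a fixed size, say in a ball of radius on the order of $1$, and the zig-zag itself has amplitude on the order of $1$ as well. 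The key point to check is the \emph{size inequality} built into the definition of looseness: one needs the Darboux ball to be wider in the $T^*\mathbb{R}^{n-1}=T^*\mathbb{R}$ directions than some universal multiple of the amplitude of the zig-zag — this is exactly why the factor $\rho>1$ appears in the definitions of $V_\rho(q_0)$ and $Z_\rho(q_0)$. After rescaling, $V_\rho(q_0)$ has $q$- and $p$-extent on the order of $\rho$, while the zig-zag has fixed amplitude, so choosing the universal constant in the looseness definition to be at most (say) $1$ and recalling $\rho>1$ gives the required inequality.

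Concretely the steps are: (1) identify the pair as $\mathfrak{z}_0\times Z_\rho(q_0)$ where $\mathfrak{z}_0=\gamma_{m_k}\cap C(z_i)$ is the one-zig-zag arc; (2) exhibit an explicit contactomorphism from $(C(z_i)\times V_\rho(q_0),\,\omega_{\mathrm{std}})$ to a standard Darboux ball $B_R\subset(\mathbb{R}^3\times T^*\mathbb{R})$ carrying $\mathfrak{z}_0\times Z_\rho(q_0)$ to $\mathfrak{z}\times\{0\text{-section}\}$ where $\mathfrak{z}$ is Murphy's standard zig-zag of fixed size $a$; (3) compare $R$ (which after rescaling is $\gtrsim \rho$) with $a$ to confirm $R>C_{\mathrm{loose}}\cdot a$ for the universal constant $C_{\mathrm{loose}}$ in the definition of a loose chart, using $\rho>1$; (4) conclude by Murphy's definition. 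The main obstacle — really the only nontrivial point — is step (3): making sure the rescaling that straightens $\gamma_{m_k}$'s local piece into a fixed-size standard zig-zag is compatible with the contact form (it scales $dz-ydx$ by an overall constant, which is fine), and then bookkeeping the resulting sizes so that the ``looseّ'' width requirement is met. Everything else is a matter of matching the explicit coordinate boxes $C(z_i)$, $V_\rho(q_0)$, $Z_\rho(q_0)$ against the model, and observing that the chosen numerical constants were reverse-engineered precisely so that the standard zig-zag fits with room to spare once $\rho>1$.
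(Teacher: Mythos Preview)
Your proposal is correct and follows essentially the same approach as the paper: the paper simply writes down the explicit rescaling contactomorphism $\Phi(x,y,z,q,p)=\big(\tfrac{m_k}{3}x,\tfrac{m_k}{3}y,\tfrac{m_k^2}{9}(z-z_i),\tfrac{m_k}{3}(q-q_0),\tfrac{m_k}{3}p\big)$ and observes that it carries the pair to $\big([-1/2,1/2]^3\times[-\rho/2,\rho/2]^2,\ \{\text{zig-zag}\}\times[-\rho/2,\rho/2]\times\{0\}\big)$, which is exactly the standard loose-chart model since $\rho>1$. Your steps (1)--(4) are precisely this, stated in prose; the only thing to add when you execute step (2) is that the $(q,p)$-coordinates must be rescaled by the same factor $m_k$ as $(x,y)$ (not left alone) in order for the map to be contact on $\mathbb{R}^5$, which is implicit in your size bookkeeping but worth making explicit.
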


\begin{proof}
    A Legendrian isotopy of a Legendrian arc $\gamma_{m_k}|_{C(z_i)}$ (where $C(z_i)\subset(\mathbb{R}^3,dz-ydx)$ is a contact submanifold) inside $C(z_i)$ lifts to a Legendrian isotopy of $\Lambda_k|_{C(z_i)\times V_{\rho}(q_0)}$ inside $C(z_i)\times V_{\rho}(q_0)$ (we simply keep $q$ and $p$ coordinates fixed). We find a Legendrian isotopy (supported inside $C(z_i)$) of arc $\gamma_{m_k}|_{C(z_i)}$ which brings its ends to $\{y=z=0\}$ near the boundary. As we explained above, this isotopy lifts to a Legendrian isotopy of $\Lambda_k|_{C(z_i)\times V_{\rho}(q_0)}$ which can be realized as a contact isotopy $\Psi_t:C(z_i)\times V_{\rho}(q_0)\rightarrow C(z_i)\times V_{\rho}(q_0)$.\\
    
    \noindent Define a contact diffeomorphism $\Phi: C\times V_{\rho}\rightarrow [-1/2,1/2]^3\times[-\rho/2,\rho/2]^2$ as
\[\Phi(x,y,z,q,p)=\Big(\frac{m_k}{3}x,\frac{m_k}{3}y,\frac{m_k^2}{9}(z-z_i),\frac{m_k}{3}(q-q_0),\frac{m_k}{3}p\Big),\]
\noindent and note that
\[\Phi\circ\Psi_1((C\times V_{\rho})\cap\mathrm{Im}\Lambda_k)=\{\text{standard zig-zag in }[-1/2,1/2]^3\}\times[-\rho/2,\rho/2]\times\{0\}.\]
\end{proof}

\appendix
\section{Appendix}
\addtocontents{toc}{\protect\setcounter{tocdepth}{0}}
\setcounter{thm}{0}
\renewcommand{\thethm}{\Alph{section}.\arabic{thm}}

\subsection{Formal Legendrian embeddings}

\noindent Let $(Y,\xi)$ be a contact manifold of dimension $2n+1$, and let $\Lambda$ be a $n$-dimensional manifold. A \textit{Legendrian embedding} is a smooth embedding $f:\Lambda\rightarrow Y$ so that $df(T\Lambda)\subset\xi$. The simplest possible invariant one can develop beyond smooth isotopy type is the \textit{formal Legendrian} isotopy type.

\begin{defn}\label{DefFormalLegIsotopy}
    A formal Legendrian embedding is a pair $(f,F_s)$, where $f :\Lambda\rightarrow Y$ is a smooth embedding, and $F_s : T\Lambda\rightarrow TY$ is a homotopy of bundle maps covering $f$, so that:
    \begin{itemize}
        \item $F_0=df$,
        \item $F_s$ is fiberwise injective for all $s\in[0,1]$, and
        \item the image of $F_1$ is contained in $\xi$, and furthermore its image is Lagrangian with respect to the linear conformal symplectic structure on $\xi$.
    \end{itemize}
    Two Legendrian embeddings $f_0, f_1 : \Lambda\rightarrow (Y, \xi)$ are formally isotopic if there is a path of formal Legendrian embeddings interpolating between them.
\end{defn}

\noindent We note that every Legendrian embedding is a formal Legendrian embedding, by letting $F_s = df$ for all $s$. The advantage of working with formal Legendrians is that they are purely algebro-geometric objects. In particular, if $\Lambda$ is contractible, then any smooth isotopy of $\Lambda$ is a formal Legendrian isotopy.

\subsection{Wrinkled embeddings and wrinkled Legendrians}

\noindent We start by describing the model for singularities needed to define wrinkled embeddings. For $\delta\in\mathbb{R}$, define a plane curve $\psi_{\delta}:\mathbb{R}\rightarrow\mathbb{R}^2$
\[\psi_{\delta}(u)=\Big(u^3-\delta u,\frac{9}{4}u^5-\frac{5\delta}{2}u^3+\frac{5\delta^2}{4}u\Big).\]

\noindent We will assume that each $\psi_{\delta}$ is a compactly supported curve, which is false as written but it is easily accomplished with a cut-off function.\\

\noindent A \textit{wrinkle} is a map $w:\mathbb{R}^n\rightarrow\mathbb{R}^{n+1}$ given by
\[w(x,u)=(x,\psi_{1-||x||^2}(u)),\quad (x,u)\in\mathbb{R}^{n-1}\times\mathbb{R}.\]
\noindent A wrinkle is singular on the sphere $\{||x||^2+3u^2=1\}$, and we distinguish \textit{cusp singularities} on the upper and lower hemispheres, as well as \textit{unfurled swallowtail} singularities along the equator $\{u=0\}$. In parametric families of wrinkles $w_t:\mathbb{R}^n\rightarrow\mathbb{R}^{n+1}$ given by
\[w_t(x,u)=(x,\psi_{t-||x||^2}(u))\]
\noindent we define an \textit{embryo} singularity to be the singularity of $w_t$ at $t=x=u=0$. As we can see, an embryo singularity is a singularity allowing wrinkles to appear or disappear in parametric families of maps.

\begin{defn}
    Let $V$ and $W$ be manifolds, with $\mathrm{dim}\,W - 1=\mathrm{dim}\,V=n$. A \textit{wrinkled embedding} is a smooth map $f:V\rightarrow W$, which is a topological embedding, and which is singular on some finite collection of codimension $1$ spheres $S^{n-1}_j\subset V$, which bound disks $D^n_j\subset V$. Near each $S^{n-1}_j$ the map $f$ is required to be modeled on a wrinkle. In parametric families, wrinkled embeddings are allowed to have embryo singularities.
\end{defn}

\noindent We are now ready to give the definition of wrinkled Legendrian embeddings.

\begin{defn}\label{DefWrinkledLeg}
    Let $\Lambda$ be a smooth $n$-manifold and $(Y,\xi)$ a contact $(2n+1)$-manifold. A \textit{wrinkled Legendrian embedding} is a smooth map $f:\Lambda\rightarrow Y$, which is a topological embedding, satisfying the following properties
    \begin{itemize}
        \item the image of $df$ is contained in $\xi$ everywhere, and $df$ is full rank outside a subset of codimension $2$,
        \item the singular set is required to be diffeomorphic to a disjoint union of $(n-2)$-spheres $\{S^{n-2}_j\}$, called \textit{Legendrian wrinkles},
        \item Each $S^{n-2}_j$ is contained in a Darboux chart $U_j$, so that $\Lambda\cap U_j$ is diffeomorphic to $\mathbb{R}^n$, and the front projection $\pi_j\circ f:\Lambda\cap U_j\rightarrow\mathbb{R}^{n+1}$ of $f$ is a wrinkled embedding.
    \end{itemize}
    In particular, the front projection of each $S^{n-2}_j$ is the unfurled swallowtail singular set of a single wrinkle in the front. For parametric families of wrinkled Legendrians we also allow \textit{Legendrian embryos}; Legendrian lifts from the front projection of embryo singularities.
\end{defn}

\begin{rem}
    We point out that the collection of Darboux charts $\{U_j\}$ is considered part of the data of a wrinkled Legendrian.
\end{rem}

\begin{prop}[See Proposition 3.4. in \cite{Mu12}]\label{WrinkledLegProp}
    Let $(f_t,F_{s,t})$ be a parametric family of formal Legendrian embeddings $\Lambda\rightarrow(Y,\xi)$, $t\in I$. Then the family $(f_t,F_{s,t})$ is homotopic through formal Legendrian embeddings to a family $\overline{f}_t:\Lambda\rightarrow(Y,\xi)$ of wrinkled Legendrian embeddings. If $(f_t,F_{s,t})$ is already a wrinkled Legendrian embedding on a closed subset $A\subset\Lambda\times I$, then we can take $\overline{f}_t=f_t$ on this set. Moreover, the statement is $C^0$-dense.
\end{prop}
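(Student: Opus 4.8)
The plan is to deduce this from the parametric wrinkled embedding theorem of Eliashberg and Mishachev by passing to the front projection. Since the conclusion is local in $Y$ and relative to a closed set, I would first cover $Y$ by Darboux charts $U_\alpha \cong (\mathbb{R}^{2n+1}, \ker(dz - \sum_i y_i\, dx_i))$ and argue one chart at a time, at each stage enlarging the closed subset of $\Lambda \times I$ on which $\overline f_t = f_t$ already holds; the relative and $C^0$-dense features propagate through this induction. So it suffices to work in the standard chart with the front projection $\pi : \mathbb{R}^{2n+1} \to \mathbb{R}^{n+1}$, $(x, y, z) \mapsto (x, z)$. The mechanism is the standard dictionary between Legendrians and their fronts: away from the locus where $d(\pi \circ f)$ drops rank the lift is forced by $y = \partial z / \partial x$, and the reason wrinkled Legendrians are the correct flexible class is that the front of a single wrinkle, i.e.\ the map $(x, u) \mapsto (x, u^3 - \delta u)$ in its nontrivial base coordinate, has a vertical differential that drops rank by at most one, so its cusp and unfurled-swallowtail singularities lift smoothly (the Legendrian wrinkle $S^{n-2}$ being the equator $\{u = 0\}$ of the front wrinkle).

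Next I would translate the formal Legendrian data $(f_t, F_{s,t})$ into input for the wrinkling theorem. Composing with $d\pi$ turns $f_t$ into a family of maps $\bar f_t = \pi \circ f_t : \Lambda \to \mathbb{R}^{n+1}$ and $F_{s,t}$ into a homotopy of bundle maps $\bar F_{s,t} = d\pi \circ F_{s,t} : T\Lambda \to T\mathbb{R}^{n+1}$; using that $F_{1,t}(T\Lambda)$ is Lagrangian in $\xi$ and that a Lagrangian is generically transverse to the vertical ($y$-)Lagrangian $\ker(d\pi|_\xi)$, a preliminary $C^0$-small adjustment makes $\bar F_{1,t}$ fiberwise injective, so $(\bar f_t, \bar F_{s,t})$ is exactly the kind of formal datum the parametric wrinkled embedding theorem accepts. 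Applying that theorem (parametrically in $t \in I$, relative to the prescribed closed set, and in its $C^0$-dense form) yields a family of wrinkled embeddings $\bar g_t : \Lambda \to \mathbb{R}^{n+1}$ that is $C^0$-close to $\bar f_t$, agrees with $\bar f_t$ on the closed set, has $d\bar g_t$ homotopic through injective bundle maps to $\bar F_{1,t}$, and whose only singularities are folds, unfurled swallowtails, and (for finitely many parameter values) embryos. I would then lift: $\overline f_t := (\bar g_t, \partial z/\partial x) : \Lambda \to \mathbb{R}^{2n+1}$ is a wrinkled Legendrian in the sense of Definition \ref{DefWrinkledLeg}, its Legendrian-wrinkle set being the unfurled-swallowtail locus of $\bar g_t$ and its Legendrian embryos the lifts of the front embryos. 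Finally, lifting the injective-bundle-map homotopy furnished by the wrinkling theorem produces a homotopy of formal Legendrian embeddings connecting the lift of $(\bar f_t, \bar F_{s,t})$ — which is formally isotopic to $(f_t, F_{s,t})$ essentially by construction, since $F_{s,t}$ is itself a witnessing bundle-map homotopy — to the genuine, hence formal, Legendrian $\overline f_t$; the relative and $C^0$-dense clauses of the proposition are inherited verbatim.

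The step I expect to be the main obstacle is the translation in the second paragraph: making rigorous the passage from ``formal Legendrian'' to ``formal front'' data — in particular controlling the rank of $d\pi$ along the moving Lagrangian $F_{s,t}(T\Lambda)$ so that the projected bundle map stays fiberwise injective over all of $\Lambda \times [0,1] \times I$, ensuring the Legendrian lift of the perturbed front is again an embedding (an open condition, hence stable under the $C^0$-small modifications), and — most importantly — verifying that the singularities the wrinkling theorem introduces into the front are precisely of the unfurled-swallowtail type that admits a smooth Legendrian lift, rather than more general corank-one Morin singularities that would not. This is what forces the wrinkle (rather than plain fold) model into the statement, and it is the part where the argument of \cite{Mu12} does the essential work; the remaining relative, parametric, and $C^0$-dense refinements are standard features of the Eliashberg--Mishachev machinery.
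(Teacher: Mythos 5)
A preliminary remark on the comparison you ask for: the paper contains no proof of this proposition at all --- it is imported verbatim from Murphy \cite{Mu12} (Proposition 3.4), which in turn rests on the Eliashberg--Mishachev wrinkled-embeddings machinery. So the only meaningful comparison is with the cited source, and your overall route (Darboux charts, pass to fronts, invoke the parametric/relative/$C^0$-dense wrinkled-embedding theorem, lift back to Legendrians) is indeed the route taken there.

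There is, however, a genuine gap at exactly the step you flag as the main obstacle, and the fix you propose does not work. You claim that, since a Lagrangian plane is ``generically transverse'' to the vertical Lagrangian $\ker(d\pi|_\xi)$, a preliminary $C^0$-small adjustment makes $d\pi\circ F_{1,t}$ fiberwise injective over all of $\Lambda\times I$. The set of Lagrangian planes meeting a fixed Lagrangian nontrivially is the Maslov cycle, a codimension-one cycle in the Lagrangian Grassmannian; over a parameter space of dimension $\geq 1$ a generic perturbation makes the family \emph{transverse} to it, i.e. meets it in a nonempty codimension-one locus, and does not push the family off it. Worse, in the relative and parametric form that the proposition (and its use in this paper) requires, avoidance is impossible in principle: at parameter values where $f_t$ is already an honest Legendrian (e.g. the endpoints of the isotopies used in Section 2, where the fronts of $\gamma_{m_k}$ have many cusps) one has $F_{1,t}=df_t$, whose image \emph{does} contain vertical directions, and this data must be kept fixed. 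If your reduction to everywhere-graphical front data were possible, wrinkles and cusp models would be unnecessary altogether, which is false. The actual argument must retain the non-graphical planes and absorb them into cusp/unfurled-swallowtail local models --- that is precisely the content of the Eliashberg--Mishachev theorem as used by Murphy, not a preliminary genericity step. A second, smaller defect: $\pi\circ f_t$ is in general not an embedding of $\Lambda$ into $\mathbb{R}^{n+1}$ (fronts have double points, and for a merely formal Legendrian the projection can be badly degenerate), so the wrinkled-embedding theorem for embeddings $V\to W$ cannot be applied to the global front as you state; one must work with the tangential data along $f_t$ itself, or decompose $\Lambda$ and use the relative statement more carefully than a chart-by-chart induction on $Y$ alone.
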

\begin{rem}
    If $\Lambda$ is contractible, any smooth isotopy is formal Legendrian isotopy, hence the theorem holds if $f_t$ is a smooth isotopy.
\end{rem}

\subsection{Twist markings and loose Legendrians}

\begin{defn}\label{DefTwistMarkings}
    Let $f:\Lambda\rightarrow (Y, \xi)$ be a wrinkled Legendrian with $k$ wrinkles, and let $\Phi\subset\Lambda$ be an embedded codimension $1$ smooth compact submanifold with boundary. Assume $\Phi$ has the topology of a sphere with $k$ open disks removed. Then $\Phi$ is called a \textit{twist marking} if the singular set of $\Lambda$ is equal to $\partial\Phi$, and there is a small neighborhood of the singular set so that $\Phi=\{u=0,||x||\geq 1\}\subset\Lambda$ in the local model $(\pi_j\circ f)(\Lambda\cap U_j)\cong w(\mathbb{R}^n)$.
    
    In parametric families $f_t : \Lambda\rightarrow (Y,\xi)$, we require the family $\Phi_t \subset\Lambda$ to be smoothly varying in $t$ whenever we are disjoint from the set of embryo singularities. At an embryo singularity modeled in the front projection $(\pi_j\circ f_t)(\Lambda\cap U_j)\cong w_t(\mathbb{R}^n)$, we require $\Phi_t = \{u = 0,||x||^2 \geq t\}$.
\end{defn}

\noindent The idea of a marking is that it is a pattern that gives us a canonical way to desingularize wrinkles.

\begin{lemma}[Lemma 4.2. in \cite{Mu12}]\label{LemmaResolvingSingularities}
    Let $f_t:\Lambda\rightarrow(Y,\xi)$ be a family of wrinkled Legendrians with $t\in D^k$, and let $\Phi_t\subset\Lambda$ be a family of markings for $f_t$. Then $f_t$ is $C^0$-close to a wrinkled Legendrian family $\widetilde{f}_t:\Lambda\rightarrow (Y,\xi)$, so that $\widetilde{f}_t$ is smooth on a neighborhood of $\Phi_t$ and equal to $f_t$ outside that neighborhood.
\end{lemma}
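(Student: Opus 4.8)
\emph{Proof proposal.} The assertion is local around the markings, so I would begin by localizing. Outside a tubular neighborhood $N$ of $\bigcup_{t}\Phi_t\times\{t\}\subset\Lambda\times D^k$ the output $\widetilde f_t$ is required to equal $f_t$, so the whole construction takes place inside $N$. By Definition \ref{DefTwistMarkings} together with the standard-neighborhood description in Definition \ref{DefWrinkledLeg}, each singular $(n-2)$-sphere $S^{n-2}_j=\partial\Phi_t$ sits in a Darboux chart in which the front projection of $f_t$ is the standard parametric wrinkle $w_t(x,u)=(x,\psi_{t-\|x\|^2}(u))$ and $\Phi_t=\{u=0,\ \|x\|^2\geq t\}$. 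After a contact isotopy supported in $N$ I may therefore assume that on each connected piece of $N$ the pair $(f_t,\Phi_t)$ is \emph{exactly} this parametric model; this normalization also puts the embryo singularities into the standard family $w_t$ with the correctly collapsing marking $\{u=0,\ \|x\|^2\geq t\}$.

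Second, I would carry out the desingularization of a single standard wrinkle, which is the core of the argument (this is Murphy's Lemma 4.2). In the front, the wrinkle carries a sphere $\{\|x\|^2+3u^2=1\}$ of cusp edges with an equatorial unfurled-swallowtail locus $\{u=0,\ \|x\|=1\}$, and the marking is the flap $\{u=0,\ \|x\|\geq1\}$ attached along that equator. Since $\dim\Lambda=n\geq2$, along the marking the contact hyperplane $\xi$ contains a symplectic $2$-plane transverse to the front-fiber direction. The plan is to introduce a contact \emph{twist} supported in an arbitrarily small neighborhood of $\Phi_t$: the time-$\pi$ flow of a rotation in this $2$-plane, with rotation angle a cut-off that equals $\pi$ along the flap and $0$ near the cusp sphere. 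The key verification is the explicit local computation showing that conjugating $w$ by this twist, followed by the associated $C^0$-small front homotopy that "opens up" the doubled zig-zag, yields a map that is a genuine smooth Legendrian embedding on a neighborhood of $\Phi$, is still the same wrinkled Legendrian away from that neighborhood, and agrees with $w$ outside the support of the twist — concretely, one checks that a pair of cusp arcs, rotated by $\pi$ in the transverse symplectic direction, becomes a single embedded smooth arc.

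Third, I would globalize and estimate. Performing the Step-two modification simultaneously on all components of $N$ (one per wrinkle, using pairwise disjoint neighborhoods of the pieces of $\Phi_t$) produces $\widetilde f_t$. Each modification is supported in a neighborhood of $\Phi_t$ that can be taken as small as we like, and a point is moved by at most the diameter of that neighborhood, so $\widetilde f_t$ is $C^0$-close to $f_t$; and $\widetilde f_t=f_t$ outside a neighborhood of $\Phi_t$ by construction. The parametric and embryo clauses are then checked directly: the twist angle and its cut-off are chosen to vary smoothly in $t\in D^k$ and to vanish as the marking $\Phi_t=\{u=0,\ \|x\|^2\geq t\}$ collapses at an embryo ($t\to0$ in $w_t$), where there is no singular sphere and hence nothing to resolve, so the families glue into a smooth family of wrinkled Legendrians over $D^k$.

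\textbf{Main obstacle.} The single-wrinkle desingularization in the second step — exhibiting the explicit transverse twist and verifying that it converts the cusp/swallowtail front into a smoothly embedded Legendrian front while preserving the Legendrian condition throughout the homotopy, with the attendant $C^0$-bound. This is precisely where the extra dimension (looseness) enters; the rest is bookkeeping with the normal forms and cut-off functions.
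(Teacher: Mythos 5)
First, a point of bookkeeping: the paper does not prove this statement at all --- it is imported verbatim as Lemma 4.2 of \cite{Mu12}, so the only meaningful comparison is with Murphy's proof, not with anything in this paper.

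Measured against that, your outline has the right overall shape (localize near the marking, desingularize in a local model, glue, check the parametric/embryo clauses and the $C^0$-bound), but the core mechanism you propose would fail. The singular set of a wrinkled Legendrian consists of the points where $df_t$ drops rank, and this set is unchanged under postcomposition with any ambient contactomorphism; so ``conjugating $w$ by the time-$\pi$ flow of a rotation in a transverse symplectic $2$-plane'' cannot by itself resolve anything, and since you cut the rotation off to be $0$ near the cusp sphere, your modification is trivial exactly near $\partial\Phi_t$, which is where the unfurled swallowtails sit and where the resolution must take place. Murphy's construction is instead a modification of the map itself, carried out in the front projection: one inserts a thin double fold (a zig-zag wall) along the \emph{whole} marking $\Phi_t$, and the explicit local computation is that where $\partial\Phi_t$ meets the wrinkle the inserted folds merge with the wrinkle's cusp edge, turning the unfurled swallowtail into the front of a smooth Legendrian; outside a neighbourhood of $\Phi_t$ nothing changes, and the insertion is $C^0$-small because the fold can be taken arbitrarily thin. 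Relatedly, your treatment of embryos is backwards: at an embryo $\Phi_t=\{u=0,\ \|x\|^2\geq t\}$ does not collapse (only the hole in it closes up, and the marking persists for $t<0$), and the inserted twist must persist through the embryo time --- it is precisely this persistent zig-zag along the marking that absorbs the newborn wrinkle and makes $\widetilde f_t$ a smooth family across births and deaths; if your modification vanished as $t\to 0$, the family $\widetilde f_t$ could not match up continuously there. Finally, your Step-1 normalization is too strong: the marking is not contained in the wrinkle Darboux charts (in this paper's application it runs from the wrinkles all the way to the charts $\mathcal{B}_i$), so one can only normalize a neighbourhood of $\Phi_t$ to a standard $J^1$ model away from the singular set, with the wrinkle model valid only near $\partial\Phi_t$; this is repairable, but as written it is not available.
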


\noindent So far, if we want to construct a smooth family of Legendrians, it is enough to find a marking. This motivates the definition of loose Legendrians: intuitively they are smooth Legendrians which look like resolutions of some wrinkled Legendrian along some marking.\\

\noindent Let $C\subset\mathbb{R}^3$ be the cube of side length $1$, and let $\Lambda_0\subset C$ be a properly embedded Legendrian arc whose front is a zig-zag and which is equal to the set $\{y = z = 0\}$ near the boundary. Define open sets
\[V_{\rho}=\{|q|<\rho,|p|<\rho\}\subset T^*\mathbb{R}^{n-1}\cong\mathbb{R}^{n-1}\times\mathbb{R}^{n-1},\quad Z_{\rho}=V_{\rho}\cap\{p=0\}.\]
\noindent The set $\Lambda_0\times Z_{\rho}$ is a Legendrian submanifold of the open subset $C\times V_{\rho}\subset\mathbb{R}^{2n+1}$, with respect to the standard contact form.

\begin{defn}
    Let $\Lambda\subset(Y,\xi)$ be a Legendrian submanifold, and let $\rho>1$. We say that $\Lambda$ is \textit{loose} if there is an open subset $U\subset Y$, so that $(U,U\cap \Lambda)$ is contactomorphic as a pair to $(C\times V_{\rho},\Lambda_0\times Z_{\rho})$. The pair $(U,U\cap\Lambda)$ is then called a \textit{loose chart}.
\end{defn}

\begin{prop}[See Proposition 4.4. in \cite{Mu12}]\label{LooseChartsProp}
        For any $\overline{\rho}>1$, an arbitrary loose chart contains another loose chart of size parameter $\overline{\rho}$. In particular a loose chart contains two disjointly embeddded loose charts, and therefore a loose chart contains infinitely many disjointly embedded loose charts.
\end{prop}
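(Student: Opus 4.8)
The plan is to reduce everything to the standard model. Since every loose chart is by definition contactomorphic as a pair to $(C\times V_\rho,\Lambda_0\times Z_\rho)$ for some $\rho>1$, it is enough to exhibit, for an arbitrary $\overline\rho>1$, an open subset $W\subset C\times V_\rho$ for which $(W,W\cap(\Lambda_0\times Z_\rho))$ is contactomorphic to $(C\times V_{\overline\rho},\Lambda_0\times Z_{\overline\rho})$. Granting this, the last two assertions are formal. Apply the claim with $\overline\rho=3$ and, inside $V_3$, take the two disjoint boxes $B_\pm=\{|q\mp\tfrac{13}{10}|<\tfrac65,\ |p|<\tfrac65\}$ centred on the zero section. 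Since $\Lambda_0\subset C$ one has $(C\times B_\pm)\cap(\Lambda_0\times Z_3)=\Lambda_0\times(Z_3\cap B_\pm)=\Lambda_0\times\{(q,0):|q\mp\tfrac{13}{10}|<\tfrac65\}$, so the strict contactomorphism $(x,y,z,q,p)\mapsto(x,y,z,q\mp\tfrac{13}{10},p)$ identifies each of the two pieces with the model loose chart of size $\tfrac65$. Hence an arbitrary loose chart contains two disjoint loose charts; running this argument inside one of the two pieces at every stage produces infinitely many pairwise disjoint loose charts.

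For the main claim the case $\overline\rho\le\rho$ is immediate, since $(\Lambda_0\times Z_\rho)\cap(C\times V_{\overline\rho})=\Lambda_0\times Z_{\overline\rho}$; so assume $\overline\rho>\rho$. The tool I would use is the conformal contact rescaling
\[
S_\lambda(x,y,z,q,p)=\big(\lambda x,\ y,\ \lambda z,\ \sqrt\lambda\,q,\ \sqrt\lambda\,p\big),\qquad S_\lambda^*(dz-ydx-pdq)=\lambda\,(dz-ydx-pdq),
\]
which is a contactomorphism of $\mathbb{R}^5$ for every $\lambda>0$; it carries $V_\rho$ to $V_{\sqrt\lambda\,\rho}$ and $Z_\rho$ to $Z_{\sqrt\lambda\,\rho}$, dilates the front plane $\mathbb{R}^2(x,z)$ by $\lambda$, and fixes the slope coordinate $y$. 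Before applying it I would first normalise $\Lambda_0$: any two single zig-zag Legendrian arcs of the same sign in $C$ that equal $\{y=z=0\}$ near $\partial C$ are Legendrian isotopic rel a neighbourhood of $\partial C$, and such an isotopy extends, via Lemma \ref{LemmaFromR3toR5}, to a contactomorphism of $C\times V_\rho$ preserving the pair; so I may assume the cusped part of $\Lambda_0$ is concentrated in a cube $\{|x|,|y|,|z|<\varepsilon\}$ with $\Lambda_0=\{y=z=0\}$ on the rest of $C$, where $\varepsilon>0$ will be chosen small.

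Now set $\lambda=(\overline\rho/\rho)^2>1$ and choose $\varepsilon<\tfrac1{2\lambda}$. Then $S_\lambda(C\times V_\rho)\supset C\times V_{\overline\rho}$, so $W:=S_\lambda^{-1}(C\times V_{\overline\rho})$ is an open subset of $C\times V_\rho$, and $S_\lambda$ maps the pair $(W,W\cap(\Lambda_0\times Z_\rho))$ to $\big(C\times V_{\overline\rho},\,(S_\lambda(\Lambda_0)\cap C)\times Z_{\overline\rho}\big)$. By the choice of $\varepsilon$ the whole cusped part of $\Lambda_0$ lies in $W$; $S_\lambda$ stretches it to a zig-zag contained in $\{|x|,|z|<\lambda\varepsilon<\tfrac12,\ |y|<\varepsilon<\tfrac12\}\subset C$ with the same slopes, while outside that region $S_\lambda(\Lambda_0)$ is still $\{y=z=0\}$. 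Thus $S_\lambda(\Lambda_0)\cap C$ is again a single zig-zag Legendrian arc in $C$, trivial near $\partial C$ and of the same sign as the normalised $\Lambda_0$ (the dilation is orientation-preserving on the front plane), so one more Legendrian isotopy rel $\partial C$, again promoted by Lemma \ref{LemmaFromR3toR5}, identifies $(S_\lambda(\Lambda_0)\cap C)\times Z_{\overline\rho}$ with $\Lambda_0\times Z_{\overline\rho}$ inside $C\times V_{\overline\rho}$. Composing the three contactomorphisms proves the main claim.

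The step that carries the real content, and the one I expect to be the main obstacle, is the normalisation of $\Lambda_0$ together with the symmetric re-identification of $S_\lambda(\Lambda_0)\cap C$ with $\Lambda_0$: one needs that a Legendrian zig-zag arc in a cube, pinned to be trivial near the boundary, is unique up to Legendrian isotopy fixed near $\partial C$ once its sign is fixed, and that its cusped part can be squeezed into an arbitrarily small concentric sub-cube with arbitrarily small slope excursion. This is exactly the one-dimensional flexibility of Legendrian arcs in $(\mathbb{R}^3,dz-ydx)$, and the only bookkeeping is to keep the slope excursion of the squeezed zig-zag below $\tfrac12$ after the dilation by $\lambda$, which is arranged by shrinking it enough beforehand. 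All remaining steps are routine, with Lemma \ref{LemmaFromR3toR5} handling the passage between contactomorphisms of the $\mathbb{R}^3$ factor and product-compatible contactomorphisms of $\mathbb{R}^5$.
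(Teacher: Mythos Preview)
The paper does not supply its own proof of this proposition; it is quoted verbatim from Murphy \cite{Mu12} and left unproved. Your argument---the conformal rescaling $S_\lambda$ together with the one-dimensional flexibility of a single zig-zag Legendrian arc in $(\mathbb{R}^3,dz-ydx)$---is precisely the mechanism behind Murphy's own proof, so at the level of ideas your proposal is correct and matches the original source.

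One technical point deserves care. You invoke Lemma~\ref{LemmaFromR3toR5} to promote the rel-$\partial C$ Legendrian isotopy in $\mathbb{R}^3$ to a contactomorphism of $C\times V_\rho$ \emph{preserving the pair} $\Lambda_0\times Z_\rho$. That is not quite what the lemma delivers: the contactomorphism it produces acts as $\phi^1\times\mathrm{Id}$ only on a proper closed sub-interval of the $q$-axis, with an interpolation region near the ends where $\Psi(\Lambda_0\times Z_\rho)$ is the trace of the isotopy $\phi^{\lambda(q,1)}(\Lambda_0)$ and hence not of product form. This does not damage the argument---you can sidestep the normalization entirely by noting that the model arc $\Lambda_0$ in the definition of a loose chart is specified only by the zig-zag condition, so you are free to take it concentrated from the outset; step 4 then simply observes that $S_\lambda(\Lambda_0)\cap C$ is again such an arc, hence $(C\times V_{\overline\rho},(S_\lambda(\Lambda_0)\cap C)\times Z_{\overline\rho})$ is a loose chart by definition---but as written the sentence claims slightly more than Lemma~\ref{LemmaFromR3toR5} provides.
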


\noindent Define an \textit{inside-out wrinkle} to be the map $\overline{w}:\mathbb{R}^n\rightarrow\mathbb{R}^{n+1}$ defined by
\begin{equation}\label{EqInside-outWrinkle}
    \overline{w}(x,u)=(x,\psi_{||x||^2-1}(u))
\end{equation}
\noindent Then $\overline{w}$ is singular on the hyperbola $\{||x||^2-3u^2=1\}$, which has unfurled swallowtails on the subset $\{u=1-||x||^2=0\}$ and cusps elsewhere. The wrinkled Legendrian $f:\mathbb{R}^n\rightarrow\mathcal{J}^1\mathbb{R}^n$ whose front projection is $\overline{w}$ is smooth outside of a compact set, but it is not standard. Let $\Phi=\{u=0,||x||\leq 1\}$. Then $\Phi$ is a marking for $f$. Moreover, if we resolve $f$ along $\Phi$ to obtain $\widetilde{f}$, then $\widetilde{f}(\mathbb{R}^n)\cap B^{2n+1}(\rho)$ is contactomorphic to a loose chart for any $\rho>1$. (see section 4.2. in \cite{Mu12} for more details).

\bibliographystyle{apacite}

\end{document}